\newcommand{\Z}{\mathbb{Z}} 
\newcommand{\R}{\mathbb{R}} 
\newcommand{\N}{\mathbb{N}} 
\newcommand{\setu}{\mathfrak{u}}
\newcommand{\rd}{\,\mathrm{d}} 
\newcommand{\sinc}{\mathrm{sinc}} 
\newcommand{\bszero}{\boldsymbol{0}} 
\newcommand{\bsh}{\boldsymbol{h}}    
\newcommand{\bsk}{\boldsymbol{k}}    
\newcommand{\bsu}{\boldsymbol{u}}    
\newcommand{\bsx}{\boldsymbol{x}}    
\newcommand{\bsn}{{\boldsymbol{n}}}    
\newcommand{\bsxi}{\boldsymbol{\xi}}    
\newcommand{\e}{\mathrm{e}}
\newcommand{\W}{\mathrm{W}}          
\DeclareSymbolFont{bbold}{U}{bbold}{m}{n}
\DeclareSymbolFontAlphabet{\mathbbold}{bbold}
\newcommand{\mi}{\mathrm{i}}
\def\citep#1#2{\cite[{#1}]{#2}}
\theoremstyle{plain}
  \newtheorem{theorem}{Theorem}
  \newtheorem{lemma}{Lemma}
\theoremstyle{definition}
  \newtheorem{example}{Example}
\theoremstyle{remark}
  \newtheorem{remark}{Remark}
\newcommand{\RefEq}[1]{~\textup{(\ref{#1})}}
\newcommand{\RefSec}[1]{Section~\textup{\ref{#1}}}
\newcommand{\RefThm}[1]{Theorem~\textup{\ref{#1}}}
\newcommand{\RefFig}[1]{Figure~\textup{\ref{#1}}}
\newcommand{\RefRem}[1]{Remark~\textup{\ref{#1}}}
\definecolor{darkred}{RGB}{139,0,0}
\definecolor{darkgreen}{RGB}{0,100,0}
\definecolor{darkmagenta}{RGB}{180,0,180}
\definecolor{darkblue}{RGB}{0,0,190}
\begin{document}
\bibliographystyle{plain}
\title{Multivariate integration over $\R^s$ with \\ exponential rate of convergence}

\author[1]{Dong T.P. Nguyen}
\author[1]{Dirk Nuyens}

\affil[1]{Department of Computer Science, KU Leuven, Celestijnenlaan 200A, B-3001 Leuven, Belgium} 
\affil[ ]{dong.nguyen@cs.kuleuven.be, dirk.nuyens@cs.kuleuven.be}

\date{\today}
\maketitle

\begin{abstract}

  In this paper we analyze the approximation of multivariate integrals over the Euclidean space for functions which are analytic. We show explicit upper bounds which attain the exponential rate of convergence. We use an infinite grid with different mesh sizes in each direction to sample the function, and then truncate it. In our analysis, the mesh sizes and the truncated domain are chosen by optimally balancing the truncation error and the discretization error.
  
  This paper derives results in comparable function space settings, extended to $\R^s$, as which were recently obtained in the unit cube by Dick, Larcher, Pillichshammer and Wo{\'z}niakowski (2011), see \cite{Dic11}.
  They showed that both lattice rules and regular grids, with different mesh sizes in each direction, attain exponential rates, hence motivating us to analyze only cubature formula based on regular meshes.
  We further also amend the analysis of older publications, e.g., Sloan and Osborn (1987) \cite{Slo87} and Sugihara (1987) \cite{Sug87}, using lattice rules on $\R^s$ by taking the truncation error into account and extending them to take the anisotropy of the function space into account.
  
\end{abstract}

\section{Introduction}\label{Introduction}
We study approximation of the multivariate integral
\begin{align}\label{eq:I}
 I(f)
 &:= 
 \int_{\R^s} f(\bsx) \rd {\bsx}
\end{align}
for functions defined over the $s$-dimensional Euclidean space $\R^s$, which belong to some normed weighted function space, by using a cubature rule using function values of $f$, cf.~\eqref{eq:QhD}.
The function space setting will be formally introduced in \RefSec{sec:WFS}, but let us make some remarks already here.
We stress that we do not integrate explicitly against any specific probability density function, and as such this implicitly requires the function to decay fast enough towards infinity to be integrable, i.e., at least with a polynomial rate strictly larger than $1$, which will be denoted by assumption~\ref{F-poly} in \RefSec{sec:WFS}.
In particular we are interested in approximating the above integral with an exponential rate $\mathcal{O}(\exp(- C_1(s) N^{C_2(s)}))$ or $\mathcal{O}(\exp (-C_3(s) N^{C_4(s)} (\ln N) ^{C_5(s)}))$ in the number of integration nodes $N$, where $C_1(s)$, $C_2(s)$, $C_3(s)$, $C_4(s)$ and $C_5(s)$ are constants depending on $s$.

The Fourier transform of a function $f$ will be denoted by 
\begin{align*}
 \hat f({\bsxi}) 
 &:= 
 \int_{\R^s} f(\bsx) \, \e^{-2 \pi\mi \bsxi \cdot  \bsx} \rd\bsx
 ,
 \qquad
 \bsxi \in \R^s.
\end{align*}
It is well known that the decay of the Fourier transform can be interpreted as a kind of smoothness of the function.
In the one dimensional case the Fourier transform of an analytic function decays faster than any polynomial.
Moreover, according to the Paley--Weiner theorem, if a real-valued analytic function has some continuation into the complex plane and satisfies some decay conditions then its Fourier transform decays exponentially fast, see \cite[Chapter 4, Theorem 2.1, 3.1]{Ste03}.
As a natural extension to the multivariate case, we will therefore further restrict to the case where the Fourier transform decays at least exponential, which will be denoted by assumption~\ref{W-exp} in \RefSec{sec:WFS}.

Our analysis is done for a cubature algorithm which can be described by two consecutive steps.
First we approximate $I(f)$ by
\begin{align}\label{eq:Ih}
 I_{\bsh} (f) 
 := 
 h_1 \cdots h_s \sum_{\bsk \in \Z^s} f(k_1 h_1, \dots, k_s h_s)
 ,
\end{align}
with different mesh sizes $h_j$, $j=1,\ldots,s$, for different dimensions, based on the anisotropy of the Fourier transform of $f$,
and then we truncate the infinite sum to obtain our final quadrature approximation
\begin{align}\label{eq:QhD}
 Q_{\bsh}^{\mathscr{D}_\bsn}(f) 
 := h_1 \cdots h_s \sum_{\bsk \in \mathscr{D}_{\bsn}} f(k_1 h_1, \dots, k_s h_s)
 ,
\end{align}
for some appropriate truncation set $\mathscr{D}_\bsn$ depending on the anisotropic decay of the function itself.
Although this is a straightforward algorithm, it will be essential to determine the mesh sizes $h_j$, $j=1,\ldots,s$, and the truncation set $\mathscr{D}_\bsn$ in an optimal way to balance their error contributions.
We remark that for integration over $\R^s$ there is only little difference between the ``rectangle'', ``trapezoidal'' and ``midpoint'' rule.
Furthermore, as the distinction does not play any role in the further analysis we just denote our rule as the ``trapezoidal rule'', similar to what was argued in \cite{Tre14} in the univariate setting.
We choose the truncation set $\mathscr{D}_\bsn$ such that $-n_j/2 \le k_j \le n_j/2$ for all $j=1,\ldots,s$, and for well chosen truncation points $n_j/2$.
This approach leads to two errors which need to be controlled, more specifically, a discretization error and a truncation error:
\begin{align}\label{eq:total-error}
  |I(f) - Q_{\bsh}^{\mathscr{D}_{\bsn}}(f)| 
  &\leq 
  |I(f) - I_{\bsh}(f)| 
  + 
  |I_{\bsh}(f) - Q_{\bsh}^{\mathscr{D}_{\bsn}}(f)|
  .
\end{align}
The discretization error will be studied in \RefSec{sec:discretization-error} and the truncation error will be studied in \RefSec{sec:truncation-error}.
In order to achieve an exponential rate of convergence we will further assume that the final integrand function decays at least exponential, cf.\ assumption~\ref{F-exp} in \RefSec{sec:WFS}, to control the truncation error and we analyze this in \RefSec{sec:exp}.
This may seem like an unreasonably strong assumption, but
for functions that do not decay this fast we employ a variable transform.
E.g., the so-called exponential and double exponential transforms by Takahasi and Mori \cite{Tak74}.
This leads us to also study functions which decay double exponentially, cf.\ assumption~\ref{F-expexp} in the next section and we analyze its truncation error in \RefSec{sec:expexp}.
The idea from \cite{Tak74} is to use a suitable change of variables to transform an integral over a domain $\Omega \subseteq \R^s$ into another integral over the Euclidean space $\R^s$, for which the integrand decays double exponentially fast towards infinity.
The transformed integral can then be approximated by the trapezoidal rule.

The extreme accuracy of the trapezoidal rule for the integration of one dimensional analytic periodic functions on intervals and on the real line was studied by many researchers, see \cite{Tre14} for a recent overview by Trefethen and Weideman. This is easy to show on periodic intervals. As a hand-waving argument the analysis can be carried over to integration on $\R$ if the function goes to zero fast and smooth enough, as then the truncated integrand is ``nearly periodic'' on the truncated interval.
Also in the one-dimensional case this can be quantified in exact terms by an analysis of the discretization and truncation errors.

We now give a quick overview of existing results.
In the setting of multivariate integration over the unit cube $[0,1]^s$, in a weighted periodic function space with exponentially decaying Fourier series expansions, it was recently shown that a regular grid (with different mesh sizes for the different dimensions), as well as ``good'' rank-$1$ lattice rules, achieve exponential rates of convergence, see Dick et al \cite{Dic11}, and Kritzer et al \cite{Kri14}.
We note explicitly that, with respect to the rate, it is shown in \cite{Dic11} that there is no difference if one takes a grid or a lattice rule in the case of periodic functions over $[0,1]^s$.
The weighted function space over $\R^s$ in the current paper, see \RefSec{sec:WFS}, is modeled after the weighted function space over $[0,1]^s$ in these papers.

Multivariate integration over $\R^s$ with exponential convergence rates was also studied in \cite{Slo87,Sug87} and recently in \cite{Irr15}. In \cite{Slo87, Sug87}, the integral $I(f)$ is approximated by an equal-weight rule based on an infinite lattice. In \cite{Slo87}, Sloan and Osborn also considered a regular grid as a special case of a lattice, a so-called ``cubic lattice'', next to more general lattices and in particular the body centered cubic lattice.
They studied the case where the function is isotropic, and thus only have one mesh size for all dimensions, and do not discuss how to balance discretization and truncation error.
Instead they fix their truncation domain in advance (to be all lattice points inside a sphere around the origin with a fixed radius) and then study the convergence in terms of decreasing the mesh size.
Also in their result it makes no difference, in terms of the exponential rate, if one takes a more general lattice as a point set or a regular grid.
They illustrate their technique numerically on different examples for fixed truncation radius.

In \cite{Irr15}, Irrgeher et al use a weighted tensor product of Gauss--Hermite quadrature rule for integrating a class of analytic functions. In their context, the analyticity of a function is characterized by the exponential decay of its Hermite coefficients. They show an explicit error bound which attains the exponential rate of convergence $\mathcal{O}(\exp(- C_1(s) N^{C_2(s)}))$. Moreover, they study so-called exponential convergence and even uniform exponential convergence with tractability, i.e., study the dependence on the dimension of $C_1(s)$, $C_2(s)$ and the constant in the $\mathcal{O}$ notation. Finally, they show an important result, i.e., necessary conditions on the decay of Hermite coefficients such that these constants become independent of the dimension.

We write $\N := \{1,2,\ldots\}$ and $\N_0 := \{0,1,2,\ldots\}$.
We use the shorthand notation $\{1\mathop{:}s\} := \{1, \ldots, s\}$.
Further we write $\R_+ := \{ x \in \R : x > 0 \}$.

\section{Weighted function spaces}\label{sec:WFS}

First, we introduce a function space in the univariate case.
Let $\nu :\R\to \R_+$ and $\omega :\R \to \R_+$.
We then define a one-dimensional space of integrable functions $E(\nu, \omega)$ 
as follows
\begin{align*}
  E(\nu,\omega)
  &:=
  \left\{ 
    f : \R \rightarrow \R 
    :
    \|f\| 
    := 
    \|f \, \nu\|_{L^\infty(\R)}
    + 
    \|\hat f \, \omega \|_{L^\infty(\R)} 
    <
    \infty
  \right\}
  .
\end{align*}
The idea is that $\nu$ controls the decay of the function and 
$\omega$ controls the decay of the Fourier transform of the function.
The faster $\nu$ and $\omega$ increase for increasing $x$ and $\xi$, the faster the function and its Fourier transform decay.

For the multivariate case, we consider the tensor product. 
Let $\nu : \R^s \to \R_+$ and $\omega : \R^s \to \R_+$.
Then we define
\begin{align*}
 E_s(\nu, \omega) 
 := 
 \left\{
   f: \R^s \rightarrow \R 
   :
   \|f\| 
   := 
   \|f \, \nu\|_{L^\infty(\R^s)}
   + 
   \|\hat f \, \omega\|_{L^\infty(\R^s)}
   < 
   \infty
 \right\}
 .
\end{align*}

In this paper, we will consider two main kinds of decay of the Fourier transform:
\begin{enumerate}[label=\textbf{\textup{(W\arabic*)}},leftmargin=3em]
  \item\label{W-poly} decay with polynomial order, for any $\alpha > 0$: 
   $$
     \omega(\bsxi)
     =
     \prod_{j=1}^s \left(1+|\xi_j|^{1+\alpha}\right),
   $$
   \item\label{W-exp} exponentially decay, with all $a_j,b_j > 0$:
   $$
     \omega(\bsxi)
     =
     \exp\left(\sum_{j=1}^s a_j \, |\xi_j|^{b_j}\right).
   $$
\end{enumerate}
Likewise we will assume three main kinds of decay of the function:
\begin{enumerate}[label=\textbf{\textup{(F\arabic*)}},leftmargin=3em]
 \item\label{F-poly} polynomial decay, for any $\alpha > 0$:
   $$
     \nu(\bsx)
     =
     \prod_{j=1}^s \left(1+|x_j|^{1+\alpha}\right),
   $$
 \item\label{F-exp} exponential decay, with all $c_j,d_j > 0$:
   $$
     \nu(\bsx)
     =
     \exp\left(\sum_{j=1}^s c_j \, |x_j|^{d_j}\right),
   $$
 \item\label{F-expexp} double exponentially decay, with all $c_j,d_j, e_j > 0$: 
   $$
     \nu(\bsx)
     =
     \exp\left(\sum_{j=1}^s e_j \exp\left(c_j \, |x_j|^{d_j}\right)\right).
   $$
\end{enumerate}
The double exponential decay \ref{F-expexp} of the function was introduced  by Takahasi and Mori  \cite{Tak74}. They studied one dimensional integration with singularities at the endpoints. By a suitable change of variable
they transformed the integration into integration over the real line where the transformed integrand decays double exponentially fast and they then applied a trapezoidal rule to approximate the integration over the real line.

The above univariate function space $E(\nu,\omega)$ where $\omega$ satisfies assumption~\ref{W-exp} and assumption~\ref{F-exp} or~\ref{F-expexp} with $d_1=1$ is studied in \cite{Sug97}. It was shown that the trapezoidal rule is nearly optimal, in the sense that the upper bound and the lower bound of the error are nearly equal where the difference depends on the number of function evaluations.

We remark that the combination of assumptions~\ref{W-poly} and~\ref{F-poly} allows the use of the Poisson summation formula \cite{Ste03} which links the discretization of the function to a discretization of its Fourier transform:
\begin{align}\label{eq:Poisson-summation}
  h_1 \cdots h_s \sum_{\bsk \in \Z^s} f(k_1 h_1, \ldots, k_s h_s)
  =
  \sum_{\bsk \in \Z^s} \hat{f}(k_1/h_1, \ldots, k_s/h_s)
  .
\end{align}

\section{The discretization error}\label{sec:discretization-error}

We consider the case when the Fourier transform of the function decays exponentially fast as in~\ref{W-exp}.
To be able to use the Poisson summation formula we also need the function itself to decay at least polynomially as in~\ref{F-poly}.
We note that this condition is also needed for integrability.
The following lemma will be useful in the estimation of both discretization error and truncation error.

\begin{lemma} We present three different estimates.
\begin{enumerate}[font=\upshape, label=(\roman*)]
\item   For $b > 0$ and $h\in (0,1]$, we have
  \begin{align}\label{lem:bound}
    \sum_{k=1}^{\infty} \exp(-k^b/h)
    &\le
     \exp(-1/h) \frac{\e \, \Gamma(1/b)}{b}
    ,
  \end{align}
  where $\e:=\exp(1)$ and $\Gamma(\cdot)$ is the gamma function.

\item   For $c >0$, $d\geq 1$ and $n \geq 0$, we have
\begin{align}\label{lem:sumexpo}
\sum_{k=n}^{\infty} \exp(- c \, k^d) \leq \exp(-c n^d) \left( 1 + \frac{\Gamma(1/d)}{c^{1/d} d}\right)
.
\end{align} 

\item For $\alpha>0$, $c>0$, $d\geq 1$ and $n \geq 0$, we have
\begin{align}\label{lem:sumdoubexpo}
\sum_{k=n}^{\infty} \exp(- \alpha\exp(c \, k^d)) \leq 
\e^{-\alpha}  \, \exp(- \alpha\exp(c \, n^d))
 \, \left( 1 + \frac{\Gamma(1/d)}{(\alpha \, c)^{1/d} d} \right)  
.
\end{align} 

\end{enumerate}

\end{lemma}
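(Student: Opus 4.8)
The plan is to handle all three estimates by a single template, since each is a sum $\sum_{k\ge m}g(k)$ of a positive decreasing function $g$: bound the sum by an integral (optionally keeping the first term explicitly), pull a leading exponential factor out front, and reduce the remaining integral to a value of the Gamma function by a power substitution. The two comparison facts I will use are that for decreasing $g\colon[m,\infty)\to\R_+$ one has both $\sum_{k\ge m}g(k)\le\int_{m-1}^\infty g(x)\rd x$ and $\sum_{k\ge m}g(k)\le g(m)+\int_m^\infty g(x)\rd x$, and that $\int_0^\infty\exp(-a x^d)\rd x=\Gamma(1/d)/(a^{1/d}d)$ for $a,d>0$ (substitute $t=ax^d$).

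For (i) I would first exploit $h\le 1$ by factoring $\exp(-k^b/h)=\exp(-1/h)\exp(-(k^b-1)/h)$; since $k\ge 1$ gives $k^b-1\ge 0$ and $1/h\ge 1$, this is at most $\exp(-1/h)\exp(-(k^b-1))=\e\,\exp(-1/h)\exp(-k^b)$. Summing and comparing with the integral started at $0$,
\begin{align*}
  \sum_{k=1}^\infty\exp(-k^b/h)
  &\le\e\,\exp(-1/h)\sum_{k=1}^\infty\exp(-k^b)\\
  &\le\e\,\exp(-1/h)\int_0^\infty\exp(-x^b)\rd x
   =\exp(-1/h)\,\frac{\e\,\Gamma(1/b)}{b},
\end{align*}
which is exactly the claimed bound, valid for every $b>0$.

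For (ii) and (iii) the summand already has the shape $\exp(-\text{increasing})$, so no rescaling of $h$ is involved and the real work is the tail integral $\int_n^\infty$. The device I would use is the elementary inequality
\begin{align*}
  x^d\ge n^d+(x-n)^d\qquad\text{for }d\ge 1,\ x\ge n\ge 0,
\end{align*}
which holds because $x\mapsto x^d-n^d-(x-n)^d$ vanishes at $x=n$ and has nonnegative derivative $d\bigl(x^{d-1}-(x-n)^{d-1}\bigr)$ on $[n,\infty)$. For (ii) this gives at once $\int_n^\infty\exp(-cx^d)\rd x\le\exp(-cn^d)\int_0^\infty\exp(-cy^d)\rd y=\exp(-cn^d)\,\Gamma(1/d)/(c^{1/d}d)$, and adding the first term $\exp(-cn^d)$ produces the bound. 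For (iii) I would first write $\exp(cx^d)\ge\exp(cn^d)\exp(c(x-n)^d)$ from the same inequality, then apply $\exp(u)\ge 1+u$ to $\exp(c(x-n)^d)$ together with $\exp(cn^d)\ge 1$ to collapse the integrand to the simpler tail $\exp(-\alpha c(x-n)^d)$; the substitution $t=\alpha c y^d$ supplies $\Gamma(1/d)/((\alpha c)^{1/d}d)$, while the factor $\exp(-\alpha\exp(cn^d))$ comes out of both the first term and the integral. Getting the overall constant into exactly the stated form — in particular tracking where an $\e^{-\alpha}$ factor belongs, which amounts to choosing how to split $\alpha\exp(cx^d)=\alpha+\alpha(\exp(cx^d)-1)$ between the prefactor and the integrand — is the bookkeeping step I would check most carefully.

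The hard part is therefore not conceptual but this constant-chasing in (ii) and especially (iii): the crude bound obtained by enlarging $\int_n^\infty$ to $\int_0^\infty$ loses the factor $\exp(-cn^d)$ (respectively $\exp(-\alpha\exp(cn^d))$), so the convexity inequality above is genuinely needed in order to pull that factor outside the integral. One small point I would flag is that $n$ (and the quantity $1/h$ in (i)) need not be integers; but since each summand is a monotone function of its index, the comparisons $\sum_{k\ge n}g(k)\le g(\lceil n\rceil)+\int_{\lceil n\rceil}^\infty g\le g(n)+\int_n^\infty g$ remain valid verbatim.
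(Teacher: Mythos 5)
Your proofs of (i) and (ii) are correct and essentially identical to the paper's: the same factoring out of $\exp(-1/h)$ using $h\le 1$ in (i), and the same convexity inequality $x^d-n^d\ge(x-n)^d$ (proved the same way) in (ii). The only cosmetic difference is that the paper shifts the summation index and compares the shifted sum $\sum_{k\ge 0}$ with $1+\int_0^\infty$, whereas you keep the first term explicitly and apply the shift inside the tail integral; both give the identical constant.

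The substantive issue is (iii), and the ``bookkeeping step'' you flag is exactly where the trouble lies --- your suspicion should be upgraded to a conclusion. The factor $\e^{-\alpha}$ in the stated bound cannot be obtained, because inequality~\eqref{lem:sumdoubexpo} as printed is false: take $\alpha=c=d=1$ and $n=1$; the left-hand side is at least its first term $\e^{-\e}\approx 0.0660$, while the right-hand side equals $\e^{-1}\,\e^{-\e}\,(1+\Gamma(1))=2\e^{-1-\e}\approx 0.0486$ (for $n=0$ the failure is even more blatant). The paper's own derivation reduces the tail at $n$ to the tail at $0$ via the termwise claim $\e^{x^d}-\e^{n^d}\ge\e^{(x-n)^d}$, justified only by monotonicity of $f(x)=\e^{x^d}-\e^{n^d}-\e^{(x-n)^d}$; but $f(n)=-1<0$ (and for $n=0$ one has $f\equiv-1$), so monotonicity proves nothing about the sign of $f$, and indeed the $k=n$ term already violates the claimed comparison ($1\not\le\e^{-\alpha}$). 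Your multiplicative version $\exp(cx^d)\ge\exp(cn^d)\exp(c(x-n)^d)$, followed by $\e^u\ge 1+u$ and $\exp(cn^d)\ge 1$, is the correct substitute and yields
\begin{align*}
\sum_{k=n}^{\infty}\exp\bigl(-\alpha\exp(c\,k^d)\bigr)
\;\le\;
\exp\bigl(-\alpha\exp(c\,n^d)\bigr)\left(1+\frac{\Gamma(1/d)}{(\alpha\,c)^{1/d}\,d}\right),
\end{align*}
i.e., the stated bound \emph{without} the $\e^{-\alpha}$ prefactor. This is the version you should prove and use; the only downstream effect is that the constants $C'(s,\nu,\bsh)$ and $C'(s)$ in Theorems~\ref{thm:DECtruncation} and~\ref{TheoDEC} lose their $\prod_j\e^{-e_j}$ factors, while the exponential rates proved there are untouched.
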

\begin{proof} The proofs are as follows.
\begin{enumerate}[label=(\roman*)]
\item
We have the estimate
\begin{align*}
  \sum_{k=1}^{\infty} \exp(-k^b/h) 
  &= \exp(-1/h) \sum_{k=1}^{\infty} \exp\left(-(k^b-1) /h\right) 
  \\
  &\leq
  \exp(-1/h)
  \sum_{k=1}^{\infty} \exp\left(-k^b + 1 \right)
  .
\end{align*}
where the last inequality yields from the condition $h \leq 1$ and $k^b-1 \ge 0$.
Moreover, interpreting the sum as an infinite right-rectangle rule applied to a decreasing function, we have 
\begin{align*} 
  \sum_{k=1}^{\infty} \e^{-k^{b}}    
  \le 
  \int_0^{\infty} \e^{-x^{b}} \rd x
  =
  \frac{1}{b}
  \int_0^{\infty} t^{\frac{1}{b} -1} \e^{-t} \rd t
  =
  \frac{\Gamma(1/b)}{b}
  .
\end{align*}
Combining these gives the desired result~\eqref{lem:bound}.
\item 
Inequality~\eqref{lem:sumexpo} is obtained as follows
\begin{align*}
\sum_{k=n}^{\infty} \exp(- c \, k^d) 
& = \exp(-c n^d) \sum_{k=n}^{\infty} \exp(- c \, (k^d - n^d))
\\
& \leq 
\exp(-c n^d) \sum_{k=n}^{\infty} \exp(- c \, (k-n)^d)
\\
&= 
\exp(-c n^d) \sum_{k=0}^{\infty} \exp(- c \, k^d)
\\
&=
\exp(-c n^d) \left( 1 + \sum_{k=1}^{\infty} \exp(- c \, k^d) \right)
\\
&\leq
\exp(-c n^d) \left( 1 + \int_{0}^{\infty} \exp\left(-c x^{d} \right) \rd x \right)
\\
&=
\exp(-c n^d) \left( 1 + \frac{\Gamma(1/d)}{c^{1/d} d}\right)
.
\end{align*}
The first above inequality is due to the following inequality
\begin{align}\label{lem:ineq2}
x^d -n^d \geq (x-n)^d
,
\end{align}
for any $d \geq 1$ and $x \geq n \geq 0$. 

Indeed, we consider the following function
\begin{align*}
f(x) = x^d -n^d - (x-n)^d
.
\end{align*}
Since
\begin{align*}
\frac{\rd f}{\rd x} = d\, x^{d-1} - d \, (x-n)^{d-1}
\geq 0
,
\end{align*}
for every $x \geq n \geq 0$ and since $f(n) =0$, inequality \eqref{lem:ineq2} is easily obtained from the monotonicity of $f$.

\item We have
\begin{align}\label{lem:ineq1}
\sum_{k=n}^{\infty} \exp(- \alpha\exp(c \, k^d)) 
&= 
 \exp(- \alpha\exp(c \, n^d)) \sum_{k=n}^{\infty} \exp(- \alpha (\exp(c \, k^d) - \exp(c \, n^d)))
 \notag
 \\
 &\leq
  \exp(- \alpha\exp(c \, n^d)) \sum_{k=n}^{\infty} \exp(- \alpha \exp(c \, (k-n)^d))
  \notag
  \\
  &=
   \exp(- \alpha\exp(c \, n^d)) \sum_{k=0}^{\infty} \exp(- \alpha \exp(c \, k^d))
   ,
\end{align}
where the first inequality is due to the following inequality
\begin{align*}
\e^{x^d} - \e^{n^d} - \e^{(x-n)^d} \geq 0
,
\end{align*}
for $d \geq 1$ and $x \geq n \geq 0$. This inequality is easily obtained from the monotonicity of the following function
\begin{align*}
f(x) = \e^{x^d} - \e^{n^d} - \e^{(x-n)^d}
.
\end{align*}
Indeed, for $d \geq 1$ and $x \geq n \geq 0$ 
\begin{align*}
\frac{ \rd f} {\rd x} = d \, x^{d-1} \e^{x^d} - d \, (x-n)^{d-1} \e^{(x-n)^d} \geq d \, x^{d-1} \e^{x^d} - d \, (x-n)^{d-1} \e^{x^d} \geq 0
.
\end{align*}
Moreover, we have
\begin{align}\label{sumdoubexpo}
\sum_{k=0}^{\infty} \exp(- \alpha \exp(c \, k^d))
 &\leq 
\e^{-\alpha} + \int_{0}^{\infty} \exp\left(-\alpha \exp\left(c x^{d} \right)\right) \, \rd x
\notag
 \\
 & \leq
 \e^{-\alpha} +  \int_{0}^{\infty} \exp\left(-\alpha (c\,x^d+1) \right) \, \rd x
 \notag
 \\
 &=
  \e^{-\alpha} \left( 1 + \frac{\Gamma(1/d)}{(\alpha \, c)^{1/d} d} \right)  
,
\end{align} 
where we use $\e^{x}\geq x+1$, for $x\geq 0$ in the second inequality .

Applying the above estimation into \eqref{lem:ineq1}, inequality~\eqref{lem:sumdoubexpo} follows.
\end{enumerate}
This completes the proof.
\end{proof}

We are now ready to prove a bound on the discretization error when we choose $h_j = (a_j h)^{1/b_j}$.

\begin{theorem}\label{thm:discretization}
Suppose $f$ belongs to $E_s(\nu,\omega)$ for some $\nu$ and $\omega$ with conditions \ref{F-poly} and \ref{W-exp} such that for some fixed
$a_j, b_j > 0$, we can take
\begin{align*}
  \omega(\bsxi) 
  =
  \exp\left({\sum_{j=1}^s a_j \, |\xi_j|^{b_j}}\right)
  .
\end{align*}
If we set $h_j = (a_j \, h)^{1/b_j}$, then, for any $h \in (0, 1/ \ln(2\, \e)]$, the following inequality is satisfied 
\begin{align*}
  |I_{\bsh}(f) - I(f)| 
  \leq  
  \, C(s,\omega) \,\|\hat f \, \omega\|_{L^\infty(\R^s)} \, \exp(-1/h)
  ,
\end{align*}
where 
\begin{align*}
  C(s, \omega)
  =
  C(s, b_1, \ldots, b_s)
  &:=
  2\,\e \sum_{\emptyset \ne \setu \subseteq \{1:s\}} \prod_{j\in\setu} \frac{\Gamma(1/b_j)}{b_j}
  .
 \end{align*}
\end{theorem}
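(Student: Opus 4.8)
The plan is to pass to the Fourier side via the Poisson summation formula \eqref{eq:Poisson-summation} and then estimate the resulting lattice sum termwise using part~(i) of the Lemma.

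First I would observe that \ref{F-poly} together with \ref{W-exp} (which forces $\hat f$ to decay faster than any polynomial) makes \eqref{eq:Poisson-summation} applicable, so $I_{\bsh}(f) = \sum_{\bsk\in\Z^s}\hat f(k_1/h_1,\dots,k_s/h_s)$. Since $I(f)=\hat f(\bszero)$, the term $\bsk=\bszero$ cancels and
\[
  |I_{\bsh}(f)-I(f)| \le \sum_{\bsk\in\Z^s\setminus\{\bszero\}} \bigl|\hat f(k_1/h_1,\dots,k_s/h_s)\bigr|.
\]
Next I would use $f\in E_s(\nu,\omega)$ to get the pointwise bound $|\hat f(\bsxi)|\le \|\hat f\,\omega\|_{L^\infty(\R^s)}\exp\bigl(-\sum_{j=1}^s a_j|\xi_j|^{b_j}\bigr)$. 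The choice $h_j=(a_j h)^{1/b_j}$ is exactly what makes each exponent collapse, since $a_j|k_j/h_j|^{b_j}=|k_j|^{b_j}/h$; hence each summand is at most $\|\hat f\,\omega\|_{L^\infty(\R^s)}\exp\bigl(-\tfrac1h\sum_{j=1}^s|k_j|^{b_j}\bigr)$.

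I would then split the lattice sum according to the support $\setu=\{j:k_j\ne 0\}$ of $\bsk$, which factorizes the sum as
\[
  \sum_{\emptyset\ne\setu\subseteq\{1:s\}}\ \prod_{j\in\setu}\Bigl(2\sum_{k=1}^{\infty}\exp(-k^{b_j}/h)\Bigr).
\]
Because $h\le 1/\ln(2\e)<1$, inequality \eqref{lem:bound} applies to each inner sum and gives $2\sum_{k\ge 1}\exp(-k^{b_j}/h)\le 2\e\,\exp(-1/h)\,\Gamma(1/b_j)/b_j$, so the $\setu$-term is bounded by $(2\e)^{|\setu|}\exp(-|\setu|/h)\prod_{j\in\setu}\Gamma(1/b_j)/b_j$.

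The one genuine point — and the step I expect to be the crux — is collapsing the factor $(2\e)^{|\setu|}\exp(-|\setu|/h)$ down to the single $2\e\,\exp(-1/h)$ appearing in $C(s,\omega)$; this is equivalent to $(2\e)^{|\setu|-1}\le\exp((|\setu|-1)/h)$, which holds precisely because $h\le 1/\ln(2\e)$ (with equality of both sides when $|\setu|=1$). After this reduction each $\setu$-term is at most $2\e\,\exp(-1/h)\prod_{j\in\setu}\Gamma(1/b_j)/b_j$, and summing over all nonempty $\setu\subseteq\{1:s\}$ and reinstating the factor $\|\hat f\,\omega\|_{L^\infty(\R^s)}$ yields exactly $C(s,\omega)\,\|\hat f\,\omega\|_{L^\infty(\R^s)}\,\exp(-1/h)$ with $C(s,\omega)$ as stated.
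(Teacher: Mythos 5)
Your proposal is correct and follows essentially the same route as the paper: Poisson summation, the pointwise bound $|\hat f|\le\|\hat f\,\omega\|_{L^\infty(\R^s)}/\omega$, the collapse of the exponent via $h_j=(a_jh)^{1/b_j}$, the decomposition over supports $\setu$ (which is just the expanded form of the paper's product $-1+\prod_j(1+2\sum_{k\ge1}\e^{-k^{b_j}/h})$), Lemma~(i), and the final reduction of $(2\e\,\e^{-1/h})^{|\setu|}$ to a single factor using $h\le 1/\ln(2\e)$. No gaps; the identification of the crux step matches the paper's last inequality exactly.
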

\begin{proof}
 We first note the obvious identity $I(f) = \hat f(\bszero)$.
 By the assumptions on the decay of the function values and the decay of the Fourier transform we can use the Poisson summation formula\RefEq{eq:Poisson-summation} to rewrite\RefEq{eq:Ih} and obtain
 \begin{align*}
   I_{\bsh}(f) 
   = 
   \sum_{\bsk \in \Z^s} \hat f(k_1/h_1, \dots, k_s/h_s).
  \end{align*}
Thus, we have
\begin{align*}
 |I_{\bsh}(f) - I(f)| 
 &=
 \left| \sum_{\bsk \in \Z^s \setminus \{\bszero\}} \hat f(k_1/h_1, \dots, k_s/h_s) \, \frac{\omega(k_1/h_1,\ldots,k_s/h_s)}{\omega(k_1/h_1,\ldots,k_s/h_s)}\right|
 \notag
 \\ 
 &\leq
 \|\hat f \, \omega \|_{L^\infty(\R^s)} \,
 \sum_{\bsk \in \Z^s \setminus \{\bszero\}} \exp\left({-\sum_{j=1}^{s} a_j \, |k_j|^{b_j} h_j^{-b_j}}\right)
  .
\end{align*}
To evenly distribute the leading terms in the sum we fix $h = h_j^{b_j}/a_j$ for each $j$, then,
we can bound the sum using~\eqref{lem:bound} as
\begin{align*}
 \sum_{\bsk \in \Z^s \setminus \{\bszero\}} \exp\left({-\sum_{j=1}^{s} |k_j|^{b_j} /h }\right)
 &=
 -1 +
   \prod_{j=1}^s \left( 1 + 2 \sum_{k=1}^{\infty} \exp(-k^{b_j}/h) \right)
   \notag
 \\
 &\leq
 -1 +
   \prod_{j=1}^s \left( 1 + \exp(-1/h) \frac{2\, \e}{b_j} \Gamma\left(\frac{1}{b_j}\right) \right)
   \notag
 \\
 &=
  \sum_{\emptyset \ne \setu \subseteq \{1:s\}}  (2\,  \e \, \exp(-1/h) )^{|\setu|} \prod_{j\in\setu} \frac{\Gamma(1/b_j)}{b_j}
  \notag
  \\
  & =
  2 \, \e \, \exp(-1/h) \sum_{\emptyset \ne \setu \subseteq \{1:s\}}  (2\,  \e \, \exp(-1/h) )^{|\setu| - 1} \prod_{j\in\setu} \frac{\Gamma(1/b_j)}{b_j}
  \notag
  \\
  &\leq 
   2 \, \e \, \exp(-1/h) \sum_{\emptyset \ne \setu \subseteq \{1:s\}} \prod_{j\in\setu} \frac{\Gamma(1/b_j)}{b_j}
   ,
\end{align*}
where for the last inequality we used $h \leq 1/ \ln(2\, \e)$.
\end{proof}

\section{The truncation error}\label{sec:truncation-error}

We consider the following truncation domain
\begin{align*}
 \mathscr{D}_{\bsn} 
 = 
 \left\{ \bsk \in \Z^s : -n_j/2 \leq k_j \leq n_j/2 \text{ for all } j=1, \ldots, s\right\},
\end{align*}
where $n_j \in \N_0$, $j=1,\dots, s$, and $n_j/2$ are called the truncation points.
Without loss of generality we consider the case where $n_1, \dots, n_s$ are even integers.
The total number of function evaluations is then given by
\begin{align*}
n=(n_1+1) \cdots (n_s+1).
\end{align*}

We could choose the truncation points differently for the left hand side and the right hand side. 
However, we assume it is easy to use a centering mapping, e.g., as in \cite{Sin13}, 
to transform the function into a new function with symmetric decay around the origin. From the bound\RefEq{eq:total-error} it is clear that if the function decays not at least exponentially at infinity then our proposed algorithm is not efficient. For example, for the one dimensional function $f(x) = (1+x^2)^{-1}$, it will take thousands of function evaluations  to achieve a few digits of accuracy, see \cite{Tre14}. 
In order to achieve an exponential convergence rate it is required that the function decays at least exponentially fast as in assumption~\ref{F-exp}.
We consider this case in \RefSec{sec:exp}.

For functions that are smooth enough in the sense of assumption~\ref{W-exp} but unfortunately decay slower, i.e., decay at most of polynomial order~\ref{F-poly}, we propose to use a variable transformation such that the function decays at least exponential~\ref{F-exp}, or even double exponential~\ref{F-expexp}.
This will be studied in \RefSec{sec:expexp}.

\subsection{Functions that decay exponentially fast}\label{sec:exp}

The truncation error for the function that decays exponentially fast will be estimated by the following theorem.

\begin{theorem} \label{thm:ECtruncation}
Suppose $f$ belongs to $E_s(\nu,\omega)$ for $\nu$ with condition \ref{F-exp} such that for some fixed $c_j> 0$ and $d_j \geq 1$, we can take
\begin{align*}
  \nu(\bsx) 
  =
  \exp\left({\sum_{j=1}^s c_j \, |x_j|^{d_j}}\right)
  .
\end{align*}
Then, for any $h_j >0$ and $n_j \geq 0$, for $j=1,\dots,s$, the following inequality is satisfied 
\begin{align}\label{expo:ineq8}
 |I_{\bsh} (f) - Q_{\bsh}^{\mathscr{D}_{\bsn}}(f)|
 & \leq
 C(s, \nu, \bsh) \, \|f \, \nu\|_{L^\infty(\R^s)}
\exp\left(- \inf_j c_j h_j^{d_j} \left(\frac{n_j+1}{2} \right)^{d_j}\right)
 ,
\end{align}
where 
\begin{align}\label{Csnuh}
C(s, \nu, \bsh) 
&:=
2\,s\,
  \prod_{j=1}^{s}
  \left(
  h_j+ 2\,\frac{\Gamma(1/d_j)}{d_j c_j^{1/d_j}}
  \right)
   .
\end{align}
\end{theorem}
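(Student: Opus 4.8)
The plan is to bound the truncation error $|I_{\bsh}(f) - Q_{\bsh}^{\mathscr{D}_{\bsn}}(f)|$ directly as the tail of the series defining $I_{\bsh}(f)$, namely $h_1\cdots h_s\sum_{\bsk\in\Z^s\setminus\mathscr{D}_{\bsn}}|f(k_1h_1,\dots,k_sh_s)|$, and then use the pointwise bound $|f(\bsx)| \le \|f\,\nu\|_{L^\infty(\R^s)}\,\nu(\bsx)^{-1} = \|f\,\nu\|_{L^\infty(\R^s)}\exp(-\sum_j c_j|x_j|^{d_j})$. First I would reduce the sum over the complement $\Z^s\setminus\mathscr{D}_{\bsn}$ to a union bound over the $s$ ``slabs'': a point $\bsk$ lies outside $\mathscr{D}_{\bsn}$ iff $|k_j| > n_j/2$ (equivalently $|k_j| \ge (n_j+2)/2$, but since $n_j$ is even the cleanest statement is $|k_j| \ge n_j/2 + 1$, which I will want to relate to $(n_j+1)/2$ for the exponent in the statement) for at least one coordinate $j$. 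So I would write $\sum_{\bsk\notin\mathscr{D}_{\bsn}} \le \sum_{j=1}^s \sum_{\bsk:\,|k_j|> n_j/2}$, and in each term factor the tensor-product sum into the $j$-th coordinate (restricted to $|k_j| > n_j/2$) times the full sums $\sum_{k\in\Z}\exp(-c_i |k h_i|^{d_i})$ over the remaining coordinates $i\ne j$.

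The two elementary sums I need are then: (a) the full sum $\sum_{k\in\Z}\exp(-c_i|k h_i|^{d_i}) = 1 + 2\sum_{k\ge 1}\exp(-c_i(k h_i)^{d_i})$, which I bound using part~(ii) of the Lemma with $n=1$ (after substituting $c \mapsto c_i h_i^{d_i}$, $d\mapsto d_i$) — this yields the factor $h_i + 2\Gamma(1/d_i)/(d_i c_i^{1/d_i})$ after a small rescaling, since $\sum_{k\ge1}\exp(-c_i h_i^{d_i} k^{d_i}) \le \int_0^\infty \exp(-c_i h_i^{d_i} x^{d_i})\rd x = \Gamma(1/d_i)/(d_i (c_i h_i^{d_i})^{1/d_i}) = \Gamma(1/d_i)/(d_i c_i^{1/d_i} h_i)$, and multiplying through by $h_i$ gives exactly the product factor in $C(s,\nu,\bsh)$ — so in fact I should track the $h_j$ prefactors from $Q_{\bsh}^{\mathscr{D}_{\bsn}}$ carefully, as they are what convert the $1/h_i$ into the $h_i$ in\RefEq{Csnuh}; and (b) the tail sum $\sum_{|k_j| > n_j/2}\exp(-c_j(k_j h_j)^{d_j}) = 2\sum_{k_j \ge n_j/2 + 1}\exp(-c_j h_j^{d_j} k_j^{d_j})$, to which I apply part~(ii) of the Lemma with $n = n_j/2 + 1$. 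This produces a leading factor $\exp(-c_j h_j^{d_j}(n_j/2+1)^{d_j}) \le \exp(-c_j h_j^{d_j}((n_j+1)/2)^{d_j})$ times the same tail constant $(1 + \Gamma(1/d_j)/(d_j(c_j h_j^{d_j})^{1/d_j}))$.

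Assembling: each of the $s$ slab-terms contributes $\|f\,\nu\|_{L^\infty}$ times $h_1\cdots h_s$ times $2\exp(-c_j h_j^{d_j}((n_j+1)/2)^{d_j})(1 + \Gamma(1/d_j)/(d_j c_j^{1/d_j} h_j))$ times $\prod_{i\ne j}(1 + 2\Gamma(1/d_i)/(d_i c_i^{1/d_i} h_i))$; after distributing the $h_i$'s into each parenthesis this is at most $2\exp(-c_j h_j^{d_j}((n_j+1)/2)^{d_j})\prod_{i=1}^s(h_i + 2\Gamma(1/d_i)/(d_i c_i^{1/d_i}))$, where for coordinate $j$ I use $h_j + \Gamma(1/d_j)/(d_j c_j^{1/d_j}) \le h_j + 2\Gamma(1/d_j)/(d_j c_j^{1/d_j})$ to make the factor uniform. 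Finally I bound each exponential $\exp(-c_j h_j^{d_j}((n_j+1)/2)^{d_j})$ by $\exp(-\inf_j c_j h_j^{d_j}((n_j+1)/2)^{d_j})$ and sum the $s$ identical terms, giving the factor $2s$ and the claimed constant $C(s,\nu,\bsh)$.

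The only mild obstacle is bookkeeping: keeping the $h_1\cdots h_s$ prefactor aligned with the $1/h_i$ that comes out of the integral comparison in the Lemma, and correctly handling the index shift $n_j/2 \to n_j/2+1$ versus the $(n_j+1)/2$ appearing in the exponent (using $n_j/2 + 1 \ge (n_j+1)/2$, valid since $n_j\ge 0$). Everything else is a routine application of parts~(ii) of the preceding Lemma together with the pointwise decay estimate from membership in $E_s(\nu,\omega)$.
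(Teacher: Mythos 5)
Your proposal is correct and follows essentially the same route as the paper: the same slab decomposition of $\Z^s\setminus\mathscr{D}_{\bsn}$ with a factor $2$ for the two tails, part~(ii) of the Lemma applied with $n=n_j/2+1$ for the restricted coordinate, the integral comparison $\sum_{k\ge 1}\exp(-c_ih_i^{d_i}k^{d_i})\le \Gamma(1/d_i)/(d_ic_i^{1/d_i}h_i)$ for the free coordinates, and the same bookkeeping that distributes $h_1\cdots h_s$ into the parentheses and uses $n_j/2+1\ge (n_j+1)/2$ together with the doubling $1+\Gamma\le 1+2\Gamma$ to obtain the uniform factor in\RefEq{Csnuh}. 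No gaps; this matches the paper's argument step for step.
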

\begin{proof}
From~\eqref{eq:Ih} and~\eqref{eq:QhD} and the definition of the norm using $\nu$, we have
\begin{align}\notag\label{expo:ineq5}
 |I_{\bsh} (f) - Q_{\bsh}^{\mathscr{D}_{\bsn}}(f)| 
 &= 
 \left| h_1 \cdots h_s \sum_{\bsk \in \Z^s \setminus\mathscr{D}_{\bsn}} f(k_1 h_1, \dots, k_s h_s)\right|
 \\\notag
 &\le
 h_1 \cdots h_s \sum_{\bsk \in \Z^s \setminus\mathscr{D}_{\bsn}} \left| f(k_1 h_1, \dots, k_s h_s) \frac {\nu(k_1h_1, \dots, k_sh_s)}{\nu(k_1h_1, \dots, k_sh_s)} \right|
 \\
 &\leq
 \|f \, \nu\|_{L^\infty(\R^s)} \, h_1 \cdots h_s  \sum_{\bsk \in \Z^s \setminus\mathscr{D}_{\bsn}} \exp\left(-\sum_{j=1}^s c_j |k_j\,h_j|^{d_j}\right)
 .
\end{align}
Now we evaluate the sum in the last inequality. We have
\begin{align}\label{expo:ineq4}
& \sum_{\bsk \in \Z^s \setminus\mathscr{D}_{\bsn}} \exp\left(-\sum_{j=1}^s c_j |k_j\,h_j|^{d_j}\right)
& \leq 
2 \sum_{i=1}^s
 \left(
 \sum_{k_1 \in \Z} \cdots \sum_{k_i= \frac{n_i}{2}+1} ^{\infty} \cdots\sum_{k_s \in \Z}
\exp\left(-\sum_{j=1}^s c_j |k_j\,h_j|^{d_j} \right)
\right)
.
\end{align}
For each of the $i=1,\ldots,s$, since $d_i \geq 1$, then using~\eqref{lem:sumexpo}, we have
\begin{align}\label{expo:ineq6}
\sum_{k_i=\frac{n_i}{2}+1}^{\infty} \exp\left(-c_i k_i^{d_i}\,h_i^{d_i} \right)
&\leq 
\exp\left(- c_i h_i^{d_i} \left(\frac{n_i}{2} +1 \right)^{d_i}\right) 
\left(1+ \frac{\Gamma(1/d_i)}{d_i c_i^{1/d_i} h_i} \right)
\notag
\\
&\leq 
\exp\left(- c_i h_i^{d_i} \left(\frac{n_i+1}{2} \right)^{d_i}\right) 
\left(1+2\, \frac{\Gamma(1/d_i)}{d_i c_i^{1/d_i} h_i} \right)
.
\end{align}

Moreover, we have
\begin{align}\label{expo:ineq7}
\sum_{k_i \in \Z}
\exp\left(-c_i h_i ^{d_i} |k_i|^{d_i} \right) 
&=
 1 + 2 \sum_{k_i=1}^{\infty} \exp\left(-c_i h_i ^{d_i} k_i^{d_i} \right)
 \notag 
 \\
&\leq 
 1 + 2 \int_{0}^{\infty} \exp\left(-c_i h_i ^{d_i} x^{d_i} \right) \, \rd x
 \notag
 \\
 & = 
 1+ 2\,\frac{\Gamma(1/d_i)}{d_i c_i^{1/d_i} h_i}
 .
\end{align}

Applying \eqref{expo:ineq6} and \eqref{expo:ineq7} into \eqref{expo:ineq4}, we get
\begin{align}\label{expo:ineq10}
& \sum_{\bsk \in \Z^s \setminus\mathscr{D}_{\bsn}}  \exp\left(-\sum_{j=1}^s c_j |k_j\,h_j|^{d_j}\right)
\notag
\\
&\qquad\qquad\leq 
2 \sum_{i=1}^s
 \exp\left(- c_i h_i^{d_i} \left(\frac{n_i+1}{2} \right)^{d_i}\right) 
  \prod_{j=1}^{s}
  \left(
	1+ 2\,\frac{\Gamma(1/d_j)}{d_j c_j^{1/d_j} h_j}
  \right)
\notag
\\
&\qquad\qquad\leq 
2\,s\, \exp\left(- \inf_{i} c_i h_i^{d_i} \left(\frac{n_i+1}{2} \right)^{d_i}\right)
  \prod_{j=1}^{s}
  \left(
  1+ 2\,\frac{\Gamma(1/d_j)}{d_j c_j^{1/d_j} h_j}
  \right)
.
\end{align}
Applying \eqref{expo:ineq10} into \eqref{expo:ineq5}, the claim follows.
\end{proof}
Now we prove the first main result. Let us denote
\begin{align}\label {C*}
&B(s) := \sum_{j=1}^{s} \frac{1}{b_j}
, 
\notag
\\
&D(s) := \sum_{j=1}^{s} \frac{1}{d_j}
, 
\notag
 \\
&  C_* := \inf_{j} C_j := \inf_{j} \frac{c_j^{1/d_j} a_j^{1/b_j}}{2}
,
\end{align}
and
\begin{align}\label{C}
C_\sharp:=  \inf_j  \left(\frac{C_*}{2}\right) ^{d_j}
.
\end{align}
\begin{theorem} \label{TheoEC}
Suppose $f$ belongs to $E_s(\nu,\omega)$ for some $\nu$ and $\omega$ with conditions \ref{F-exp} and \ref{W-exp} such that for some fixed $a_j, b_j, c_j> 0$ and $d_j \geq 1$, we can take
\begin{align*}
  \nu(\bsx) 
  =
  \exp\left({\sum_{j=1}^s c_j \, |x_j|^{d_j}}\right)
  ,
\end{align*}
and
\begin{align*}
  \omega(\bsxi) 
  =
  \exp\left({\sum_{j=1}^s a_j \, |\xi_j|^{b_j}}\right)
  .
\end{align*}
We set $h_j = \left( a_j h \right)^{1/b_j}$ with
\begin{align*}
 h   
 =   
  N^{-\frac{1}{B(s)+D(s)}}
  C_\sharp^{\frac{-D(s)}{B(s)+D(s)}}
  ,
\end{align*}
and
\begin{align}\label{nj}
n_j+1 
=  
\max \left\{
\left\lfloor  
\frac{C_*} {C_j} \, 
C_\sharp^ {\frac{1}{B(s)+D(s)} \left(\frac{D(s)}{b_j} - \frac{B(s)}{d_j}\right)}
 N^{\frac{1}{B(s)+D(s)} \left(\frac{1}{b_j}+\frac{1}{d_j}\right)} 
\right\rfloor,
1 \right\}
.
\end{align}
Then, for all $N$ sufficiently large, the following inequality is satisfied 
\begin{align*}
|I(f) - Q_{\bsh}^{\mathscr{D}_{\bsn}}(f)| 
\leq
 C(s) 
 \, \exp\left(
 -N^{\frac{1}{B(s)+D(s)}}
 C_\sharp^{\frac{D(s)}{B(s)+D(s)}}
 \right) 
 \, \|f\|,
\end{align*}
where 
\begin{align}\label{Cs}
C(s)
& := 
4\,s\,
  \prod_{j=1}^{s}
  \left(
  a_j^{1/b_j} + 2\,\frac{\Gamma(1/d_j)}{d_j c_j^{1/d_j}}
  \right)
  + 
   4\,\e \sum_{\emptyset \ne \setu \subseteq \{1:s\}} \prod_{j\in\setu} \frac{\Gamma(1/b_j)}{b_j}
,
\end{align}
and $N$ is an upper bound for the total number of function evaluations as
\begin{align*}
(n_1+1)\cdots (n_s+1) \leq N.
\end{align*}
\end{theorem}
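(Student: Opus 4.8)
The plan is to combine the two error estimates already established — the discretization bound from \RefThm{thm:discretization} and the truncation bound from \RefThm{thm:ECtruncation} — via the triangle inequality \eqref{eq:total-error}, and then verify that the stated choices of $h$ and $n_j+1$ balance the two contributions so that both are of the order $\exp(-N^{1/(B(s)+D(s))} C_\sharp^{D(s)/(B(s)+D(s))})$. First I would substitute $h_j=(a_j h)^{1/b_j}$ into the truncation exponent of \eqref{expo:ineq8}: this gives $c_j h_j^{d_j}((n_j+1)/2)^{d_j} = c_j (a_j h)^{d_j/b_j}((n_j+1)/2)^{d_j}$. The idea behind the definition \eqref{nj} is that, ignoring the floor and the $\max$, one has $(n_j+1)/2 \approx (C_*/C_j)\,C_\sharp^{(\cdots)/(B+D)} N^{(1/b_j+1/d_j)/(B+D)}/2$; plugging this in, together with $C_j = c_j^{1/d_j}a_j^{1/b_j}/2$, should make the $j$-dependence cancel so that $c_j h_j^{d_j}((n_j+1)/2)^{d_j}$ equals the same quantity $N^{1/(B+D)}C_\sharp^{D/(B+D)}$ for every $j$ (up to the distortion introduced by the floor, which I address below). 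Hence $\inf_j$ of these exponents is exactly the target rate, and the prefactor $C(s,\nu,\bsh)$ with $h_j = (a_jh)^{1/b_j} \le a_j^{1/b_j}$ (using $h\le 1$ for $N$ large) is bounded by $2s\prod_j(a_j^{1/b_j}+2\Gamma(1/d_j)/(d_jc_j^{1/d_j}))$, which is half the first term of $C(s)$ in \eqref{Cs}; the factor $2$ slack absorbs the floor perturbation.

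Next I would treat the discretization term: \RefThm{thm:discretization} requires $h\in(0,1/\ln(2\e)]$, which holds for all $N$ sufficiently large since $h = N^{-1/(B+D)}C_\sharp^{-D/(B+D)} \to 0$. It gives a bound $C(s,\omega)\|\hat f\omega\|_{L^\infty}\exp(-1/h)$, and by the choice of $h$ we have $1/h = N^{1/(B+D)}C_\sharp^{D/(B+D)}$, precisely the target rate. Writing $C(s,\omega) = 2\e\sum_{\emptyset\ne\setu\subseteq\{1:s\}}\prod_{j\in\setu}\Gamma(1/b_j)/b_j$, this contributes half of the second term of $C(s)$. Since $\|\hat f\omega\|_{L^\infty(\R^s)}\le\|f\|$ and $\|f\nu\|_{L^\infty(\R^s)}\le\|f\|$, both terms can be bounded by $\|f\|$, and adding the two error contributions (each at most its respective half of $C(s)$ times $\exp(-\text{rate})\|f\|$, after the floor slack) yields the claimed inequality.

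It remains to confirm the constraint $(n_1+1)\cdots(n_s+1)\le N$, i.e. that $N$ is a legitimate upper bound on the number of nodes. Using $n_j+1 \le (C_*/C_j)C_\sharp^{(D/b_j - B/d_j)/(B+D)} N^{(1/b_j+1/d_j)/(B+D)}$ from \eqref{nj} (the floor only decreases it, and I would check the $\max\{\cdot,1\}$ case separately, where the factor is at most a constant so it is harmless for $N$ large), the product over $j$ telescopes: the exponents of $N$ sum to $\sum_j(1/b_j+1/d_j)/(B+D) = (B+D)/(B+D)=1$, giving an $N^1$ factor, and the remaining $C_*,C_j,C_\sharp$ powers must be checked to multiply to something $\le 1$ (this uses $C_* = \inf_j C_j \le C_j$, so $C_*/C_j\le 1$, and the $C_\sharp$-exponents sum to $\sum_j(D/b_j-B/d_j)/(B+D) = (DB - BD)/(B+D) = 0$, so those cancel entirely). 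Hence the product is at most $N$ as required.

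The main obstacle I anticipate is bookkeeping the effect of the floor function and the $\max\{\cdot,1\}$ in \eqref{nj} on the truncation exponent: replacing the ideal $(n_j+1)/2$ by $\lfloor\cdot\rfloor/2$ can only be controlled when the ideal value is at least, say, $1$, which is why "$N$ sufficiently large" is needed, and one must argue that for those $j$ where the $\max$ picks out $1$ the corresponding exponent $c_jh_j^{d_j}(1/2 \cdot 2)^{d_j}$-type term is still $\ge$ (a constant times) the target rate, or else that $\inf_j$ is attained at an index where the floor is benign — this is precisely where the constant $C_\sharp$ in \eqref{C}, with its extra factor $(C_*/2)^{d_j}$, comes from, and getting that case analysis right is the delicate part. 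Everything else is substitution and the elementary inequalities $h\le 1$, $\e^x\ge 1+x$, and $\|\hat f\omega\|_{L^\infty},\|f\nu\|_{L^\infty}\le\|f\|$.
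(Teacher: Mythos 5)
Your proposal is correct and follows essentially the same route as the paper's proof: triangle inequality, the bounds of Theorems \ref{thm:discretization} and \ref{thm:ECtruncation}, the choice of $n_j$ reducing the truncation exponent to $C_\sharp\, h^{B(s)/D(s)}N^{1/D(s)}$, balancing this against $1/h$, and the telescoping node-count computation. The only slight imprecision is your claim that the per-coordinate exponent becomes the \emph{same} quantity for every $j$: after substitution it is $(C_*/2)^{d_j}\,(h^{B(s)}N)^{1/D(s)}$, which still depends on $j$ and is only bounded below by the target rate via $C_\sharp=\inf_j (C_*/2)^{d_j}$ — but you correctly identify this as precisely the role of $C_\sharp$, so nothing essential is missing.
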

\begin{remark}
 $N$ is chosen sufficiently large such that $h \leq 1/ \ln(2\, \e)$, which is the condition on $h$ in \RefThm{thm:discretization}.
\end{remark}
\begin{proof}
We have
\begin{align*}
|I(f) - Q_{\bsh}^{\mathscr{D}_{\bsn}}(f)| 
\leq 
|I(f) - I_{\bsh} (f)| 
+ 
|I_{\bsh} (f) - Q_{\bsh}^{\mathscr{D}_{\bsn}}(f)|
.
\end{align*}
We first study the discretization error $|I(f) - I_{\bsh} (f)|$. 
We see that all the conditions of \RefThm{thm:discretization} are satisfied.
We set $h_j = (a_j h)^{1/b_j}$, $j=1, \dots, s$, such that the discretization error can be estimated as
\begin{align}\label{expo:diserror}
 |I(f) - I_{\bsh} (f)|
 \leq  
 C(s,\omega)\, \exp\left(-1/h\right)\,\|\hat f \, \omega\|_{L^\infty(\R^s)}
 .
\end{align}

Now we estimate the truncation error $|I_{\bsh} (f) - Q_{\bsh}^{\mathscr{D}_{\bsn}}(f)|$. All the conditions of  \RefThm{thm:ECtruncation} are satisfied, then substituting the chosen step size into \eqref{expo:ineq8} the truncation error can now be written as
\begin{align}\label{eq:trunerr}
 |I_{\bsh} (f) - Q_{\bsh}^{\mathscr{D}_{\bsn}}(f)|
 & \leq
 C(s,\nu)\,
 \, \exp\left( - \inf_j c_j \left( a_j h \right)^{\frac{d_j}{b_j}} \left(\frac{n_j+1}{2}\right)^{d_j}\right)\,
 \|f \, \nu\|_{L^\infty(\R^s)}
 ,
\end{align}
where 
\begin{align*}
C(s,\nu) 
=
  2\,s\,
  \prod_{j=1}^{s}
  \left(
  a_j ^{1/b_j} + 2\,\frac{\Gamma(1/d_j)}{d_j c_j^{1/d_j}}
  \right)
  .
\end{align*}

For all $j=1, \dots, s$ define $n_j$ as
\begin{align}\label{eq:nj}
 n_j+1 
 = \max \left\{
 \left\lfloor 
 \frac
 	{ C_* h^\frac{B(s)}{D(s)\, d_j} N^{\frac{1}{D(s)\,d_j}}}
 	{C_j    h^{1/b_j}}
 \right\rfloor,
  1 \right\}
 .
\end{align}
Note that $\lfloor x \rfloor \geq \frac{x}{2}$, for all $x \geq 1$ then 
\begin{align*}
 n_j+1 
 \geq  
 \frac{ C_* h^\frac{B(s)}{D(s)\, d_j} N^{\frac{1}{D(s)\,d_j}}}
 {2\,C_j\,h^{1/b_j}}
 .
\end{align*}
Applying the above inequality into \eqref{eq:trunerr} leads to 
\begin{align}\notag\label{eq:trunerr2}
  |I_{\bsh} (f) - Q_{\bsh}^{\mathscr{D}_{\bsn}}(f)|
  & \leq
   C(s,\nu) \, \exp\left(-\inf_{j}\left(\frac{C_*}{2}\right) ^{d_j}  \left(h^{B(s)}N\right)^{\frac{1}{D(s)}}\right)
   \|f \, \nu\|_{L^\infty(\R^s)}
  \\
  &=
  C(s,\nu)  \, \exp\left(-C_\sharp\, h^{\frac{B(s)}{D(s)}}N^{\frac{1}{D(s)}}  \right)
   \|f\, \nu\|_{L^\infty(\R^s)}
  .
\end{align}

Adding the discretization error~\eqref{expo:diserror} and the truncation error~\eqref{eq:trunerr2} we receive the total error
\begin{align}\label{totalerror}
 |I(f) - I_{\bsh} (f)|
 \leq  
 \widetilde C (s)
 \left[
 \exp\left(-1/h\right) 
 +
 \exp\left(-C_\sharp\, h^{\frac{B(s)}{D(s)}}N^{\frac{1}{D(s)}}  \right)
 \right]
 \|f\|
 ,
\end{align} 
where $\widetilde C(s)$ is given by
\begin{align*}
\widetilde C(s)
=
2\,s\,
  \prod_{j=1}^{s}
  \left(
  a_j^{1/b_j} + 2\,\frac{\Gamma(1/d_j)}{d_j c_j^{1/d_j}}
  \right)
  + 
   2\,\e \sum_{\emptyset \ne \setu \subseteq \{1:s\}} \prod_{j\in\setu} \frac{\Gamma(1/b_j)}{b_j}
   .
\end{align*}
It is clear that when $h$ decreases, the first term in the square bracket of~\eqref{totalerror} increases while the second term decreases. Hence, the optimal $h$ is chosen by balancing these two terms, which leads to
\begin{align*}
h^{-1} 
=  
C_\sharp \, h^{\frac{B(s)}{D(s)}}N^{\frac{1}{D(s)}}
,
\end{align*}
this implies 
\begin{align*}
 h   
 =   
  N^{\frac{-1}{B(s)+D(s)}}
  C_\sharp^{\frac{-D(s)}{B(s)+D(s)}}
  .
\end{align*}
This is equivalent to balancing the orders of magnitude of the discretization error~\eqref{expo:diserror} and the truncation error~\eqref{eq:trunerr2}.

Substituting the chosen $h$ into~\eqref{eq:nj} we get the optimal way of choosing $n_1, \dots, n_s$ as in~\eqref{nj}.
Thus, from~\eqref{totalerror} the total error is given by
\begin{align*}
|I(f) - Q_{\bsh}^{\mathscr{D}_{\bsn}}(f)| 
\leq
 C(s)
 \, \exp\left(
 -N^{\frac{1}{B(s)+D(s)}}
 C_\sharp^{\frac{D(s)}{B(s)+D(s)}}
 \right) 
 \, \|f\|,
\end{align*}
for any $N \in \N$, where $C(s)=2\,\widetilde C (s)$.

It is easy to see that $(n_1+1)\cdots (n_s+1) \leq N$ since
\begin{align*}
 (n_1+1)\cdots (n_s+1) 
 &\leq 
 \prod_{j=1}^{s}
 \frac
 { C_*\,h^\frac{B(s)}{D(s)\, d_j} N^{\frac{1}{D(s)\,d_j}}}
 {C_j\,h^{1/b_j}}
 \leq  
 N.
  \qedhere
\end{align*}

\end{proof}
\begin{remark}\label{reLambda}
In the proof we used $\lfloor x \rfloor \geq \frac{x}{2}$, for $x$ large this is a severe underestimate for $\lfloor x \rfloor$ since for larger $x$ we have $\lfloor x \rfloor \gg x/2$. There exists a better estimate by considering $\lfloor x \rfloor \geq \lambda \, x$, for some $ 0< \lambda \leq 1$. The following part of the proof is easily changed with respect to this estimate. Furthermore, it is easy to see that choosing a sufficient larger $\lambda$ we achieve a better constant in the convergence. In the numerical section we do not use the rough bound but choose a better $\lambda$ (depending on the case).
\end{remark}

\subsection{Functions that decay double exponentially fast}\label{sec:expexp}

Let us denote 
\begin{align*}
e_* = \inf_{j} e_j
.
\end{align*}
The truncation error is bounded by the following theorem.
\begin{theorem} \label{thm:DECtruncation}
Suppose $f$ belongs to $E_s(\nu,\omega)$ for $\nu$  with condition \ref{F-exp} such that for some fixed $c_j> 0$ and $d_j \geq 1$, we can take
\begin{align*}
  \nu(\bsx)
     =
     \exp\left(\sum_{j=1}^s e_j \exp\left(c_j \, |x_j|^{d_j}\right)\right)
  .
\end{align*}
Then, for any $h_j >0$ and $n_j \geq 0$, for $j=1,\dots,s$, the following inequality is satisfied 
\begin{align}\label{dexpo:diserr}
  |I_{\bsh} (f) - Q_{\bsh}^{\mathscr{D}_{\bsn}}(f)|
  &\leq  
  C'(s, \nu, \bsh)
\exp\left[- e_* \exp\left(\inf_{j}c_j h_j^{d_j} \left(\frac{n_j+1}{2}\right)^{d_j}\right) \right]
  ,
\end{align}
where
\begin{align*}
C'(s,\nu, \bsh) 
:=
2\,s 
\prod_{j=1}^s 
\e^{-e_j} \left( h_j + 2 \frac{\Gamma(1/d_j)}{e_j \, c_j^{1/d_j} \, d_j} \right) 
.
\end{align*}
\end{theorem}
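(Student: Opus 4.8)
The plan is to mirror the structure of the proof of \RefThm{thm:ECtruncation}, since the double-exponential case differs only in that the inner radial sum is now a sum of double-exponentially decaying terms, for which part (iii) of the Lemma is the right tool (just as part (ii) was the right tool in the exponential case). So first I would start from the identity
\begin{align*}
 |I_{\bsh} (f) - Q_{\bsh}^{\mathscr{D}_{\bsn}}(f)|
 &=
 \left| h_1 \cdots h_s \sum_{\bsk \in \Z^s \setminus\mathscr{D}_{\bsn}} f(k_1 h_1, \dots, k_s h_s)\right|
 \leq
 \|f \, \nu\|_{L^\infty(\R^s)} \, h_1 \cdots h_s  \sum_{\bsk \in \Z^s \setminus\mathscr{D}_{\bsn}} \frac{1}{\nu(k_1 h_1,\dots,k_s h_s)}
 ,
\end{align*}
exactly as in \eqref{expo:ineq5}. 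The difference from the exponential case is that $1/\nu$ does \emph{not} factor over the coordinates (the exponent contains $\exp(\sum_j e_j \exp(c_j|x_j|^{d_j}))$, whose reciprocal is $\prod_j \e^{-e_j\exp(c_j|x_j h_j|^{d_j})}$ — actually it \emph{does} factor, so $1/\nu(k_1h_1,\dots,k_sh_s) = \prod_{j=1}^s \exp(-e_j\exp(c_j|k_jh_j|^{d_j}))$), so the product structure is retained and the same union-bound splitting of $\Z^s\setminus\mathscr{D}_{\bsn}$ into $2\sum_{i=1}^s$ one-sided slices used in \eqref{expo:ineq4} applies verbatim.

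Next, for each slice $i$ I would bound the two kinds of one-dimensional sums that appear. For the ``half-line beyond the truncation point'' sum in coordinate $i$, apply \eqref{lem:sumdoubexpo} with $\alpha = e_i$, $c \rightsquigarrow c_i h_i^{d_i}$, $d = d_i$ and $n \rightsquigarrow (n_i/2+1)$, which gives, after the harmless replacement $(n_i/2+1) \ge (n_i+1)/2$ and absorbing the loss into a factor $2$ just as in \eqref{expo:ineq6},
\begin{align*}
 \sum_{k_i = n_i/2+1}^{\infty} \exp\!\left(-e_i \exp(c_i h_i^{d_i} k_i^{d_i})\right)
 \le
 \e^{-e_i}\,\exp\!\left[-e_i \exp\!\left(c_i h_i^{d_i} \Bigl(\tfrac{n_i+1}{2}\Bigr)^{d_i}\right)\right]
 \left(1 + 2\,\frac{\Gamma(1/d_i)}{(e_i c_i)^{1/d_i} h_i\, d_i}\right).
\end{align*}
For the full-line sum over the remaining coordinates $j \ne i$, I would bound $\sum_{k_j\in\Z}\exp(-e_j\exp(c_jh_j^{d_j}|k_j|^{d_j}))$ by $1 + 2\int_0^\infty \exp(-e_j\exp(c_jh_j^{d_j}x^{d_j}))\rd x$ and then use $\e^x\ge x+1$ to replace the inner $\exp$ by $c_jh_j^{d_j}x^{d_j}+1$, producing $\e^{-e_j}(1 + 2\Gamma(1/d_j)/((e_jc_j)^{1/d_j}h_j d_j))$ — this is essentially the computation \eqref{sumdoubexpo} used inside the Lemma, applied with the extra $h_j^{d_j}$ scaling. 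Multiplying the $i$-th factor and the $s-1$ remaining factors gives, for each $i$, a product $\prod_{j=1}^s \e^{-e_j}(h_j + 2\Gamma(1/d_j)/(e_j c_j^{1/d_j} d_j))$ after pulling a factor $h_j$ out of each parenthesis (so that $C'(s,\nu,\bsh)$ comes out with the stated form), times the double-exponential factor in coordinate $i$.

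Finally I would bound $e_i \exp(c_ih_i^{d_i}((n_i+1)/2)^{d_i}) \ge e_* \exp(\inf_j c_jh_j^{d_j}((n_j+1)/2)^{d_j})$ for every $i$, so that every one of the $s$ slice-terms is dominated by the single worst exponent; summing the $s$ of them (and recalling the leading factor $2$ from the slicing) yields the factor $2s$ and the claimed bound \eqref{dexpo:diserr}. Combining with $h_1\cdots h_s$ out front and noting that the $h_j$ absorbed into the parentheses accounts for that product, the theorem follows. The one point needing a little care — the ``main obstacle'', though it is minor — is getting the bookkeeping of the $h_j$ factors and the $\e^{-e_j}$ factors consistent between the ``beyond truncation'' coordinate and the ``full line'' coordinates, so that all $s$ slice-bounds share the \emph{same} product constant $C'(s,\nu,\bsh)$; this works precisely because \eqref{lem:sumdoubexpo} already carries a matching $\e^{-\alpha}$ prefactor, so the $i$-th factor has the same shape as the others.
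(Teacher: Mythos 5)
Your proposal is correct and follows essentially the same route as the paper's proof: the same insertion of $\nu/\nu$, the same $2\sum_{i=1}^s$ one-sided slicing of $\Z^s\setminus\mathscr{D}_{\bsn}$, part (iii) of the Lemma (via~\eqref{lem:sumdoubexpo} and~\eqref{sumdoubexpo}) for the half-line and full-line sums, and the same absorption of $h_1\cdots h_s$ into the product to form $C'(s,\nu,\bsh)$. The only wrinkle is the constant $(e_j c_j)^{1/d_j}$ versus $e_j\,c_j^{1/d_j}$ in the denominators, a discrepancy already present between the Lemma and the paper's own statement, and immaterial to the result.
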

\begin{proof}
 Using the definition of the norm and the assumption on $\nu$, we get
\begin{align}\label{dexpo:ineq5}
 |I_{\bsh} (f) - Q_{\bsh}^{\mathscr{D}_{\bsn}}(f)| 
 &= 
 \left| h_1 \cdots h_s \sum_{\bsk \in \Z^s \setminus\mathscr{D}_{\bsn}} f(k_1 h_1, \dots, k_s h_s)\right|
 \notag
 \\
 &\le
 h_1 \cdots h_s \sum_{\bsk \in \Z^s \setminus\mathscr{D}_{\bsn}} \left| f(k_1 h_1, \dots, k_s h_s) \frac {\nu(k_1h_1, \dots, k_sh_s)}{\nu(k_1h_1, \dots, k_sh_s)} \right|
 \notag
 \\
 &\leq 
 \|f \, \nu\|_{L^\infty(\R^s)}
 \, h_1 \cdots h_s  
 \sum_{\bsk \in \Z^s \setminus\mathscr{D}_{\bsn}} \exp\left[-\sum_{j=1}^s e_j \exp\left(c_j |k_j\,h_j|^{d_j}\right)\right]
 .
\end{align}
Now we evaluate the summation in the last inequality. We have
\begin{align}\label{dexpo:ineq4}
& \sum_{\bsk \in \Z^s \setminus\mathscr{D}_{\bsn}} \exp
\left(
-\sum_{j=1}^s 
e_j \exp \left(c_j |k_j\,h_j|^{d_j}\right)
\right)
\notag
\\
&\qquad\qquad\leq
2 \sum_{{i}=1}^s
 \left[
 \sum_{k_1 \in \Z} \cdots \sum_{k_i= \frac{n_i}{2}+1} ^{\infty} \cdots\sum_{k_s \in \Z}
\exp
\left(
-\sum_{j=1}^s e_j \exp\left(c_j |k_j\,h_j|^{d_j}\right)
\right)
\right]
.
\end{align}
Since $d_i \geq 1$ then using~\eqref{lem:sumdoubexpo} and $n_i/2 + 1 \geq (n_i+1)/2$, we have
\begin{align}\label{dexpo:ineq6}
&\sum_{k_i=\frac{n_i}{2}+1}^{\infty} 
\exp
\left(
-e_i \exp
\left(
c_i k_i^{d_i}\,h_i^{d_i}
 \right)
 \right)
 \leq
 \exp\left[
 - e_i \exp\left(c_i h_i^{d_i} \, \left(\frac{n_i+1}{2} \right)^{d_i}\right) 
 \right]
 \e^{-e_i} \left( 1 + \frac{\Gamma(1/d_i)}{e_i \, c_i^{1/d_i} h_i \, d_i} 
 \right) 
 .
\end{align}
Moreover, using the inequality~\eqref{sumdoubexpo}, we obtain
\begin{align}\label{dexpo:ineq7}
\sum_{k_i \in \Z} 
\exp\left(-e_i \exp\left(c_i h_i ^{d_i} k_i^{d_i} \right)\right) 
&=
 2 \sum_{k_i=0}^{\infty} \exp\left(-e_i \exp\left(c_i h_i ^{d_i} k_i^{d_i} \right)\right)
 - \e^{-\e_i}
 \notag 
 \\
 & \leq
 \e^{-e_i} \left( 1 + 2 \frac{\Gamma(1/d_i)}{e_i \, c_i^{1/d_i} h_i \, d_i} \right) 
 .
\end{align}

Applying \eqref{dexpo:ineq6} and \eqref{dexpo:ineq7} into \eqref{dexpo:ineq4}, we get
\begin{align}\label{dexpo:ineq8}
& \sum_{\bsk \in  \Z^s \setminus\mathscr{D}_{\bsn}} \exp\left[-\sum_{j=1}^s e_j \exp \left(c_j |k_j\,h_j|^{d_j}\right)\right]
\notag
\\
&\leq 
2 
\sum_{i=1}^s 
\exp\left[ - e_i \exp\left(c_i h_i^{d_i} \, \left(\frac{n_i+1}{2} \right)^{d_i}\right) \right]
\prod_{j=1}^s 
\e^{-e_j} \left( 1 + 2 \frac{\Gamma(1/d_j)}{e_j \, c_j^{1/d_j} h_j \, d_j} \right) 
\notag
\\
&\leq
2 \, s \, 
\exp\left[- e_* \exp\left(\inf_{i} c_i h_i^{d_i} \left(\frac{n_i+1}{2}\right)^{d_i}\right) \right]
\prod_{j=1}^s 
\e^{-e_j} \left( 1 + 2 \frac{\Gamma(1/d_j)}{e_j \, c_j^{1/d_j} h_j \, d_j} \right) 
.
\end{align}
Using the above inequality into \eqref{dexpo:ineq5} yields the claim.
\end{proof}

Let us use the same notations for $C_*$ as in~\eqref{C*} and $C_\sharp$ as in~\eqref{C}. Now we obtain the total error for functions which decay double exponential.

\begin{theorem} \label{TheoDEC}
Suppose $f$ belongs to $E_s(\nu,\omega)$ for some $\nu$ and $\omega$ with conditions \ref{F-expexp} and \ref{W-exp} such that for some fixed $a_j, b_j, c_j, e_j> 0$ and $d_j \geq 1$, we can take
\begin{align*}
 \nu(\bsx) 
 =
 \exp
 \left
 (\sum_{j=1}^s e_j \exp\left(c_j \, |x_j|^{d_j}\right)
 \right),
\end{align*}
and
\begin{align*}
\omega (\bsxi)
 =
 \exp\left(\sum_{j=1}^s a_j |\xi_j|^{b_j}\right)
 .
\end{align*}
We set $h_j = \left( a_j h \right)^{1/b_j}$ with
\begin{align*}
h= 
N^{-1/B(s)} 
\left( \frac{\ln ( e_*^{-B(s)} N)}{ C_\sharp \, B(s) }\right)^{D(s)/B(s)}
,
\end{align*}
and
\begin{align*} 
n_j+1 
 = 
 \max 
 \left\{
 \left\lfloor 
 \frac
 	{ C_* h^\frac{B(s)}{D(s)\, d_j} N^{\frac{1}{D(s)\,d_j}}}
 	{C_j    h^{1/b_j}}
 \right\rfloor
 ,1 \right\}
 .
\end{align*}

Then, for all $N$ sufficiently large, the following inequality is satisfied  
\begin{align*}
|I(f) - Q_{\bsh}^{\mathscr{D}_{\bsn}}(f)|
 \leq  
  C'(s) 
  \exp\left
  (-N^{1/B(s)} 
  \left(\ln (e_* ^{-B(s)} N ) \right)^{-D(s)/B(s)} 
	C_\sharp^{D(s)/B(s)}  B(s)^{D(s)/B(s)}
  \right)
 \, \|f\| 
 ,
\end{align*}
where
\begin{align*}
C'(s) 
&:=
4\,s 
\prod_{j=1}^s 
\e^{-e_j} \left( a_j^{1/b_j}  + 2 \frac{\Gamma(1/d_j)}{e_j \, c_j^{1/d_j} \, d_j} \right) 
+
4\,\e \sum_{\emptyset \ne \setu \subseteq \{1:s\}}   \prod_{j\in\setu} \frac{\Gamma(1/b_j)}{b_j}
,
\end{align*}
and
\begin{align*}
(n_1+1)\cdots (n_s+1) \leq N.
\end{align*}
\end{theorem}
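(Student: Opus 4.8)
The plan is to combine the discretization error bound of \RefThm{thm:discretization} with the double-exponential truncation error bound of \RefThm{thm:DECtruncation}, exactly mirroring the structure of the proof of \RefThm{TheoEC}. First I would split the total error via\RefEq{eq:total-error} into the discretization part $|I(f)-I_\bsh(f)|$ and the truncation part $|I_\bsh(f)-Q_\bsh^{\mathscr{D}_\bsn}(f)|$. Since $\nu$ satisfies\ref{F-expexp} it in particular decays at least polynomially, so\ref{F-poly} holds and \RefThm{thm:discretization} applies with $h_j=(a_jh)^{1/b_j}$; for $N$ large enough that $h\le 1/\ln(2\e)$ this gives $|I(f)-I_\bsh(f)|\le C(s,\omega)\,\e^{-1/h}\,\|\hat f\,\omega\|_{L^\infty}$.

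Next I would estimate the truncation error by substituting $h_j=(a_jh)^{1/b_j}$ into\RefEq{dexpo:diserr}. The exponent $\inf_j c_j h_j^{d_j}((n_j+1)/2)^{d_j}$ becomes $\inf_j c_j (a_jh)^{d_j/b_j}((n_j+1)/2)^{d_j}$. Using $\lfloor x\rfloor\ge x/2$ for $x\ge 1$ together with the stated choice of $n_j+1$, one computes (just as in the derivation of\RefEq{eq:trunerr2}) that $c_j(a_jh)^{d_j/b_j}((n_j+1)/2)^{d_j}\ge (C_*/2)^{d_j}(h^{B(s)}N)^{1/D(s)}$ for each $j$, hence the inner exponent is bounded below by $C_\sharp\,h^{B(s)/D(s)}N^{1/D(s)}$. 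Therefore the truncation error is at most $C'(s,\nu)\,\exp\!\big(-e_*\exp(C_\sharp\,h^{B(s)/D(s)}N^{1/D(s)})\big)\,\|f\,\nu\|_{L^\infty}$, where $C'(s,\nu)$ is the constant\RefEq{Csnuh}-analogue with $h_j$ replaced by $a_j^{1/b_j}$.

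The heart of the argument is then the balancing of $\e^{-1/h}$ against $\exp(-e_*\exp(C_\sharp h^{B(s)/D(s)}N^{1/D(s)}))$. Setting $1/h = e_*\exp(C_\sharp h^{B(s)/D(s)}N^{1/D(s)})$, take logarithms to get $C_\sharp h^{B(s)/D(s)}N^{1/D(s)} = \ln(e_*^{-1}h^{-1})$. I would solve this to leading order: the ansatz $h^{-1}\sim N^{1/B(s)}(\text{logarithmic factor})^{-D(s)/B(s)}$ plugged in shows the log term is $\tfrac{1}{B(s)}\ln(e_*^{-B(s)}N)$ up to lower-order corrections, which yields precisely the stated $h = N^{-1/B(s)}\big(\ln(e_*^{-B(s)}N)/(C_\sharp B(s))\big)^{D(s)/B(s)}$. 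With this $h$ the common error exponent is $1/h = N^{1/B(s)}\big(\ln(e_*^{-B(s)}N)\big)^{-D(s)/B(s)} C_\sharp^{D(s)/B(s)} B(s)^{D(s)/B(s)}$, giving the claimed bound with $C'(s)=2$ times the sum of the two constant factors. Finally, the node-count bound $(n_1+1)\cdots(n_s+1)\le N$ follows directly by dropping the floors and multiplying the $s$ factors, since $\prod_j h^{B(s)/(D(s)d_j)}N^{1/(D(s)d_j)}/h^{1/b_j} = h^{B(s)/D(s)-B(s)}\,N^{1/D(s)}$ and the powers of $h$ and $N$ cancel against $C_*/C_j$ as in \RefThm{TheoEC}; one must only check that $n_j+1\ge 1$ which is built into the $\max$.

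The main obstacle I anticipate is making the balancing step rigorous: the equation defining the optimal $h$ is transcendental (it involves $h$ both polynomially and inside a nested exponential-logarithm), so the closed-form $h$ given in the statement is only an asymptotically optimal choice rather than an exact solution. I would therefore not try to solve it exactly but instead \emph{verify} that the prescribed $h$ makes both terms of the total error $\le C'(s)/2$ times the claimed exponential, using that for $N$ sufficiently large the chosen $h$ lies in $(0,1/\ln(2\e)]$ and that $\exp(C_\sharp h^{B(s)/D(s)}N^{1/D(s)}) = e_*^{-1}h^{-1}$ exactly by construction of $h$ up to the lower-order terms, so that the truncation term is bounded by the same exponential as the discretization term. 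Handling these lower-order logarithmic corrections carefully — and stating cleanly what ``$N$ sufficiently large'' means — is the one place where genuine care beyond the routine is needed.
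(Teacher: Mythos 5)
Your proposal is correct and follows essentially the same route as the paper's proof: split via\RefEq{eq:total-error}, apply \RefThm{thm:discretization} and \RefThm{thm:DECtruncation} with $h_j=(a_jh)^{1/b_j}$, use $\lfloor x\rfloor\ge x/2$ with the stated $n_j$ to reduce the truncation exponent to $e_*\exp(C_\sharp h^{B(s)/D(s)}N^{1/D(s)})$, and then balance $h^{-1}=e_*\exp(C_\sharp h^{B(s)/D(s)}N^{1/D(s)})$ approximately (the paper notes the exact solution is a Lambert-W expression and substitutes the ansatz $h=N^{-1/B(s)}$ into the rearranged equation, exactly your leading-order step). Your closing caution about verifying that the approximate $h$ still controls both terms is well placed --- the paper itself glosses over this --- and the only blemish is a bookkeeping slip in the node-count check, where the product over $j$ gives $h^{0}N^{1}$ rather than $h^{B(s)/D(s)-B(s)}N^{1/D(s)}$, though the conclusion $(n_1+1)\cdots(n_s+1)\le N$ is still right.
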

\begin{remark}\label{remN}
 $N$ is chosen sufficiently large such that $h \leq 1/ \ln(2\, \e)$, which is the condition on $h$ in \RefThm{thm:discretization}.
\end{remark}
\begin{proof}
Using \RefThm{thm:discretization} the discretization error does not change and is given by 
\begin{align}\label{dexpo:diserror3}
 |I(f) - I_{\bsh} (f)|
 \leq  
 C(s,\omega)\, \exp\left(-1/h\right)\,\|\hat f\, \omega \|_{L^\infty(\R^s)}
 .
\end{align}

The truncation error $|I_{\bsh} (f) - Q_{\bsh}^{\mathscr{D}_{\bsn}}(f)|$ is estimated using \RefThm{thm:DECtruncation}.
We set $h_j = (a_j h)^{1/b_j}$, $j=1, \dots, s$, the discretization error can be estimated as
\begin{align}\label{dexpo:diserr2}
 |I_{\bsh} (f) -  Q_{\bsh}^{\mathscr{D}_{\bsn}}(f)| 
  &\leq 
    C'(s,\nu) \, 
    \exp\left[- e_* \exp\left(\inf_{j}c_j a_j^{\frac{d_j}{b_j}} h^{\frac{d_j}{b_j}} \left(\frac{n_j+1}{2}\right)^{d_j}\right) 
    \right]\, 
    \|f \, \nu\|_{L^\infty(\R^s)}
  ,
\end{align}
where 
\begin{align*}
C'(s,\nu) 
:=
2\,s 
\prod_{j=1}^s 
\e^{-e_j} \left( a_j^{1/b_j} + 2 \frac{\Gamma(1/d_j)}{e_j \, c_j^{1/d_j} \, d_j} \right) 
.
\end{align*}

For all $j=1, \dots, s$ define $n_j$ as 
\begin{align*}
 n_j+1 
 = 
 \max 
 \left\{
 \left\lfloor 
 \frac
 	{ C_* h^\frac{B(s)}{D(s)\, d_j} N^{\frac{1}{D(s)\,d_j}}}
 	{C_j    h^{1/b_j}}
 \right\rfloor, 1 \right\}
 .
\end{align*}

Note that $\lfloor x \rfloor \geq \frac{x}{2}$, for all $x \geq 1$, the truncation error is rewritten as
\begin{align} \label{dexpo:ineq9}
  |I_{\bsh} (f) - Q_{\bsh}^{\mathscr{D}_{\bsn}}(f)| 
  & 
  \leq 
  C'(s,\nu) \,
  \exp
  \left(
  -e_* \exp\left(C_\sharp \, h^{\frac{B(s)}{D(s)}}N^{\frac{1}{D(s)}}  \right)
  \right)
  \|f \, \nu\|_{L^\infty(\R^s)} 
  .
\end{align}

Similarly to \RefThm{TheoEC}, by balancing the orders of magnitude of two errors we obtain 
\begin{align}\label{eq:h}
h^{-1} 
= 
e_* \exp\left(C_\sharp \, h^{\frac{B(s)}{D(s)}}N^{\frac{1}{D(s)}}  \right) 
\end{align}
which leads to
\begin{align}\label{hW}
h = 
N^{-1/B(s)} 
\left(
 \frac{D(s) \W\left(C_\sharp \, \frac{B(s)}{D(s)} \, e_*^{-B(s)/D(s)}  N^{1/D(s)}\right)}{C_\sharp\,B(s) } \right)^{D(s)/B(s)}
,
\end{align}
where $\W(\cdot)$ is the Lambert-W function.

Here, we will approximately solve the equation \eqref{eq:h} which is equivalent to
\begin{align*}
h= 
N^{-1/B(s)} 
\left( \frac{- \ln (e_* h) }{ C_\sharp }\right)^{D(s)/B(s)}
.
\end{align*}
Substituting $h = N^{-1/B(s)}$ into the right hand side of the above equality implies
\begin{align}\label{hlog}
h= 
N^{-1/B(s)} 
\left( \frac{ -\ln ( e_*  N^{-1/B(s)})}{ C_\sharp}\right)^{D(s)/B(s)}
,
\end{align}
which asymptotically agrees with~\eqref{hW} since $\W(c\,x)$ is equivalent to $\ln(x)$ when $x$ goes to infinity for any positive constant $c$.
Substituting the chosen $h$ as in~\eqref{hlog} into \eqref{dexpo:diserror3} and \eqref{dexpo:ineq9}, the total error is estimated by the combination of these two inequalities.
\end{proof}

\section{Numerical results}

In this section we present some numerical examples to illustrate the results of \RefThm{TheoEC} and \RefThm{TheoDEC}. We consider three toy examples of integration that might be interesting since they are seen in many applications, namely: integration with respect to Gaussian distribution, integration with respect to exponential distribution and Fourier-like integration. 
For the last two integrations we apply so-called double exponential transformations, which were firstly developed by Mori and Takahasi, see \cite{Mor05, Tak74, Tan09}. Particularly, we use the change of variables
\begin{align*}
I(f) 
= 
\int_{\Omega} f(\bsx) \rd \bsx 
= 
\int_{\R^s} f(\phi_1(u_1), \dots, \phi_s(u_s)) \prod_{j=1}^s \phi_j'(u_j) \rd {\bsu}
,
\end{align*}
where  $\Omega \subseteq \R^s$ and $\phi_j$, for $j=1,\ldots,s$, are suitable double exponential transformations. Generally, these transformations not only give single or double exponential decay of the function, but also keep the smoothness of the integrand. We will discuss more about which transformations should be used for each particular example in the further part.

In order to calculate with very high precision, our test are implemented in C++ using the \textit{Boost.Multiprecision} library which allows us to define variables with arbitrary decimal digit precision.
\begin{example} We consider the integral
\begin{align}\label{example1}
I(f) 
= 
\int_{\R^s} \exp \left(-\sum_{j=1}^s x_j^2 \right)\rd {\bsx}
=
	\pi^{s/2}
.
\end{align}
The Fourier transform is given as 
\begin{align*}
\hat f (\bsxi)
= 
	 \left( \sqrt{\pi}/2 \right)^{s}
	\exp\left(-\sum_{j=1}^s \pi^2 \xi_j^2\right)	 
	 .
\end{align*}
Since the integrand and its Fourier transform already decay exponentially fast, for this integrand there is no need to use double exponential transform. In \RefFig{fig:1} the relative error $|I(f) - Q_{\bsh}^{\mathscr{D}_{\bsn}}(f)| / |I(f)|$ is shown with respect to the number of function evaluations (log-log) where the step sizes are chosen as in \RefThm{TheoEC} and $\lambda$ is chosen approximately equal to $1$,  see \RefRem{reLambda}. We see that the relative error decays exponentially like $\mathcal{O}(\exp(-c\,N^{1/s}))$, where $c\approx 1.60,\, 1.57,\, 1.52,\, 1.32$ for the case $s=1,2,4,8$ respectively.
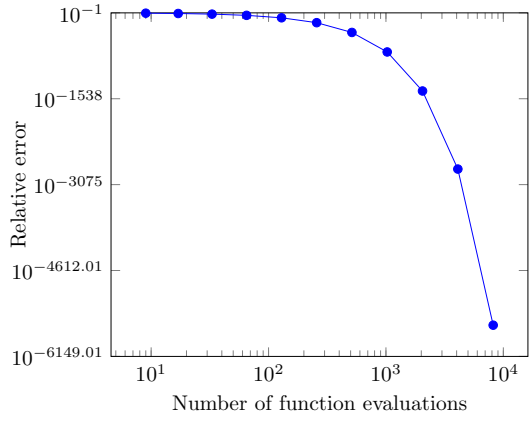
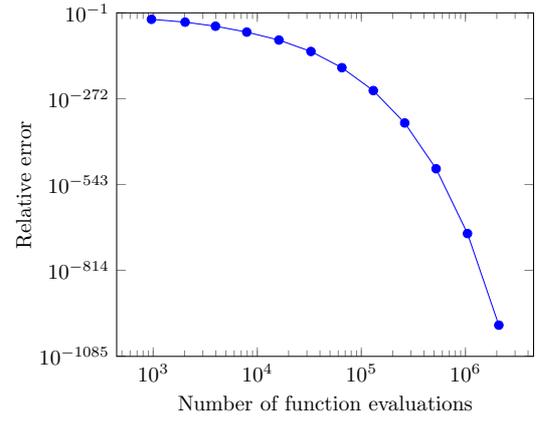
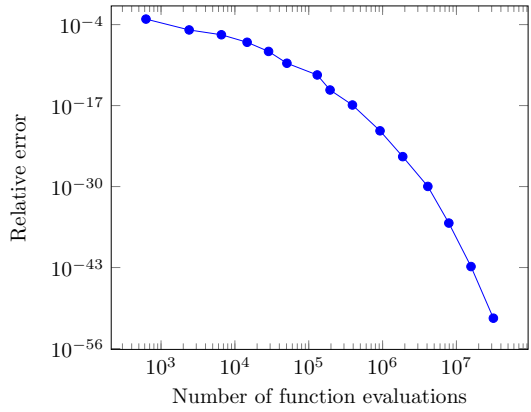
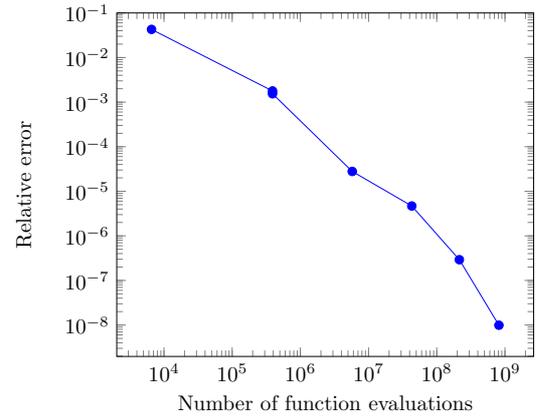
\begin{figure}
	\centering
	\begin{subfigure}[b]{0.4\textwidth}
		\centering
		\begin{tikzpicture}[scale=0.8]
		\begin{loglogaxis}
		[/pgf/number format/.cd,
        		1000 sep={},
        		xlabel={Number of function evaluations},
        		ylabel={Relative error},
        		ylabel style={yshift=0.3cm},
        		ymax = 0.1,  
        		legend pos = south east]	
		\addplot[color=blue, mark= *] table {trapz1D.txt};
		\end{loglogaxis}
		\end{tikzpicture}
	\caption{$s=1$}
	\end{subfigure}
	\hfill
	\begin{subfigure}[b]{0.4\textwidth}
		\centering
		\begin{tikzpicture}[scale=0.8]
		\begin{loglogaxis}
			[/pgf/number format/.cd,
        			1000 sep={},
        			xlabel={Number of function evaluations},
        			ylabel={Relative error},
        			ylabel style={yshift=0.3cm},
        			ymax = 0.1,  
        			legend pos = south east]
			\addplot[color=blue, mark= *] table {trapz2D.txt};
		\end{loglogaxis}
		\end{tikzpicture}
	\caption{$s=2$}
	\end{subfigure}
	\hfill
	\begin{subfigure}[b]{0.4\textwidth}
		\centering
		\begin{tikzpicture}[scale=0.8]
		\begin{loglogaxis}
			[/pgf/number format/.cd,
        			1000 sep={},
        			xlabel={Number of function evaluations},
        			ylabel={Relative error},
        			ylabel style={yshift=0.3cm},
        			ymax = 0.1,  
        			legend pos = south east]
			\addplot[color=blue, mark= *] table {trapz4D.txt};
		\end{loglogaxis}
		\end{tikzpicture}
	\caption{$s=4$}
	\end{subfigure}
	\hfill
	\begin{subfigure}[b]{0.4\textwidth}
		\centering
		\begin{tikzpicture}[scale=0.8]
		\begin{loglogaxis}
			[/pgf/number format/.cd,
        			1000 sep={},
        			xlabel={Number of function evaluations},
        			ylabel={Relative error},
        			ylabel style={yshift=0.3cm},
        			ymax = 0.1,  
        			legend pos = south east]
			\addplot[color=blue, mark= *] table {trapz8D_4.txt};			
		\end{loglogaxis}
		\end{tikzpicture}
	\caption{$s=8$}
	\end{subfigure}
	\caption{The relative error of the integration $\int_{\R^s} \exp(-\sum_{j=1}^s x_j^2) \rd \bsx$ with respect to the number of function evaluations.}
	\label{fig:1}
\end{figure}

\end{example}

\begin{example} To provide numerical examples that illustrate the convergence rate using double exponential transformation, we consider the integration over $\R_+^s$ with respect to the exponential probability density
\begin{align*}
I(f) 
= 
\int_{\R_{+} ^s} f (\bsx)\rd {\bsx}
:=
\int_{\R_{+} ^s} \prod_{j=1}^s x_j^2  \exp \left(\sum_{j=1}^s-x_j \right)\rd {\bsx}.
\end{align*}
Since $\exp \left(\sum_{j=1}^s-x_j \right)$ and $g(\bsx)=\prod_{j=1}^s x_j^2$ are entire functions,  due to the Paley--Wiener theorem discussed in \RefSec{Introduction} the Fourier transform $\hat f$ decays exponentially fast.

To get double exponential decay towards both $-\infty$ and $+\infty$ we apply the following double exponential transformations:

\begin{align*}
x_j = \phi_j(u_j):=\exp(u_j - \exp(-u_j))
,
\end{align*}
for $j =1,\dots, s$. This results in the integral
\begin{align*}
I(f) 
= 
\int_{\R^s} 	
	f(\exp(\bsu - \exp(-\bsu)))
	\prod_{j=1}^s \left( 1 + \exp(-u_j)\right) \exp\left(u_j -\exp\left(-u_j\right)\right)
	\rd {\bsu}
	.
\end{align*}
Applying our proposed method where the step sizes and truncation points are chosen as in \RefThm{TheoDEC}, the obtained result is displayed in \RefFig{fig:2} (log-log). We can see that the convergence rate again matches well with the expected convergence rate in the~\RefThm{TheoDEC}, i.e., of order $\mathcal{O}(\exp(-c\, \,N^{1/s} /\log(N))$, where $c\approx 4.37,\, 6.10,\,10.50,\, 9.63$  for the case $s=1,2,4,8$ respectively.

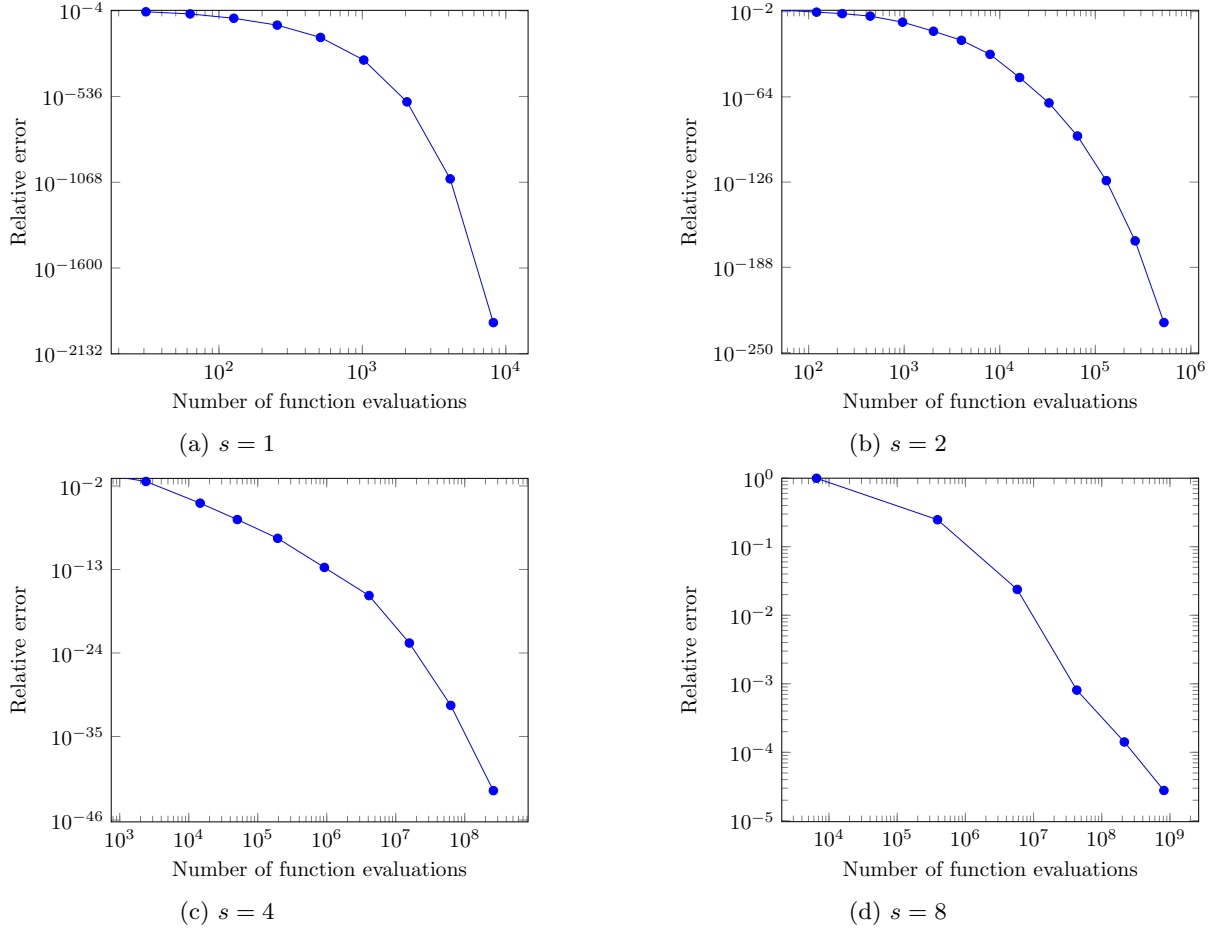
\begin{figure}
	\centering
	\begin{subfigure}[b]{0.4\textwidth}
		\centering
		\begin{tikzpicture}[scale=0.8]
		\begin{loglogaxis}
			[/pgf/number format/.cd,
        			1000 sep={},
        			xlabel={Number of function evaluations},
        			ylabel={Relative error},
        			ylabel style={yshift=0.3cm},
        			ymax = 0.1,  
        			legend pos = south east]
        		\addplot[color=blue, mark= *] table {double1D.txt};
		\end{loglogaxis}
		\end{tikzpicture}
	\caption{$s=1$}
	\end{subfigure}
	\hfill
	\begin{subfigure}[b]{0.4\textwidth}
		\centering
		\begin{tikzpicture}[scale=0.8]
		\begin{loglogaxis}
			[/pgf/number format/.cd,
        			1000 sep={},
        			xlabel={Number of function evaluations},
        			ylabel={Relative error},
        			ylabel style={yshift=0.3cm},
        			ymax = 0.1,  
        			legend pos = south east]
			\addplot[color=blue, mark= *] table {double2D.txt};
			
		\end{loglogaxis}
		\end{tikzpicture}
	\caption{$s=2$}
	\end{subfigure}
	\hfill
	\begin{subfigure}[b]{0.4\textwidth}
		\centering
		\begin{tikzpicture}[scale=0.8]
		\begin{loglogaxis}
			[/pgf/number format/.cd,
        			1000 sep={},
        			xlabel={Number of function evaluations},
        			ylabel={Relative error},
        			ylabel style={yshift=0.3cm},
        			ymax = 0.1,  
        			legend pos = south east]
			\addplot[color=blue, mark= *] table {double4D_lambda085.txt};
			
		\end{loglogaxis}
		\end{tikzpicture}
	\caption{$s=4$}
	\end{subfigure}	
	\hfill
	\begin{subfigure}[b]{0.4\textwidth}
		\centering
		\begin{tikzpicture}[scale=0.8]
		\begin{loglogaxis}
			[/pgf/number format/.cd,
        			1000 sep={},
        			xlabel={Number of function evaluations},
        			ylabel={Relative error},
        			ylabel style={yshift=0.3cm},
        			ymax = 1,  
        			legend pos = south east]
			\addplot[color=blue, mark= *] table {double8D_lambda08.txt};
		\end{loglogaxis}
		\end{tikzpicture}
	\caption{$s=8$}
	\end{subfigure}
	\caption{The relative error of the integration $\int_{\R_+^s} \prod_{j=1}^s x_j^2 \exp(-\sum_{j=1}^s x_j) \rd \bsx$ with respect to the number of function evaluations.}
	\label{fig:2}
\end{figure}
\end{example}
\begin{example}In this example we consider a Fourier-type integration 
\begin{align*}
I(f) = \int_{\R_{+}^s} \prod_{j=1}^s \frac{\sin(x_j)}{x_j} \rd \bsx
.
\end{align*}

We apply the following double exponential transformations, see \cite[\S 5.4]{Mor05} and \cite{Oou99}, for $j=1, \ldots,s$,
\begin{align}\label{FTDE1}
 x_j = M_j \, \phi_j(u_j):= \frac{M_j \, u_j}{1 -\exp (-2 \, u_j - \alpha_j (1-\e^{-u_j}) - \beta (\e^{u_j} -1))},
\end{align}
where $\beta = \frac{1}{4}$, $\alpha_j = \beta / \sqrt{1 + M_j \log (1+M_j)/ 4\pi}$ and $M_j$ is chosen depending on $h_j$ as
\begin{align}\label{hM}
M_j = \pi / h_j,
\end{align}
where $h_j$ is the step size. This results in the following integral
\begin{align}\label{intex3}
I(f) = \int_{\R^s} \prod_{j=1}^s \frac{\sin(M_j \, \phi_j(u_j))}{ \phi_j(u_j)} \phi_j'(u_j)\rd \bsu
:= I(g)
.
\end{align}
   
\begin{figure}
\centering
\begin{tikzpicture}
\begin{axis}
[ line width=0.5pt, xlabel = {$u_j$}, ylabel={$g(u_j)$}, axis lines=middle]
\addplot [smooth, color=blue, no markers] table {gu.txt};
\addplot [ only marks, mark size=1pt] table{trap_points.txt};
\end{axis}
\end{tikzpicture}
\caption{The graph of the function $g(u_j):=\frac{\sin(M_j \, \phi_j(u_j))}{ \phi_j(u_j)} \phi_j'(u_j)$ with $M_j =10$ and the function values at the trapezoidal points (black dots).}
       \label{ex3}
\end{figure}
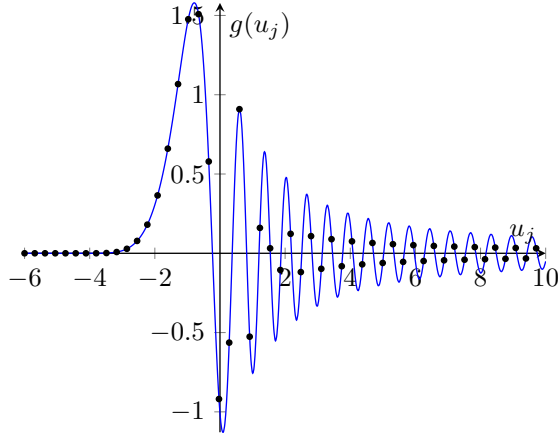

For large negative $u_j$, the univariate transformed integrand decays double exponentially. For large positive $u_j$, in contrary to the other double exponential transformations,~\eqref{FTDE1} does not improve the decay of the integrand. Instead, it guarantees the discretized transformed function (on the trapezoidal points) decays double exponentially fast, which is in fact the more accurate condition for the truncation error to hold. Indeed, when $n_j$ goes to $+\infty$, with the chosen $M_j$ as in~\eqref{hM} we have
\begin{align*}
\sin(M_j \, \phi_j(n_j h_j)) &= \sin\left( \frac{M_j \, n_j h_j}{1 -\exp (-2 \, n_j h_j - \alpha_j (1-\e^{-n_j h_j}) - \beta (\e^{n_j h_j} -1))}\right)
\\
&=\sin\left( \frac{\pi \, n_j }{1 -\exp (-2 \, n_j h_j - \alpha_j (1-\e^{-n_j h_j}) - \beta (\e^{n_j h_j} -1))}\right)
\\
&
\approx
(-1)^{n_j} \pi \, n_j \exp (-2 \, n_j h_j - \alpha_j (1-\e^{-n_j h_j}) - \beta (\e^{n_j h_j} -1))
,
\end{align*}
due to the fact that $\sin\left(\pi n/(1-x)\right) = \sin(\pi n + \pi  n x/(1-x))\approx (-1)^n \pi  n x$ when $x \approx 0$. Since the right hand side of the last equality decays double exponentially fast when $n$ goes to $+\infty$ and $\phi'(n_j h_j)/\phi(n_j h_j)$ is bounded, then the univariate transformed integrand becomes double exponentially small also at large positive trapezoidal points. As a result, we can truncate the trapezoidal sum at a moderate number of terms and expect an exponential convergence rate.
The graph of the univariate transformed function and the function values at the trapezoidal points are shown in \RefFig{ex3}.
The non-symmetry of the univariate transformed integrand leads to different behavior of the multivariate transformed integrand in different $s$ dimensional orthants. Therefore, in this example instead of truncating the integration domain in advance, we use an adaptive truncation strategy as was done in \cite{Oou99}.

Since the considered function is isotropic, $h_j$ and $M_j$ are chosen equal to $\hbar$ and $M$ respectively for all $j$. Due to the analyticity of the transformed integrand, its Fourier transform decays exponentially fast, presumably, with $a_j=a$ and $b_j=1$ for all $j$ and some positive constant $a$. Substituting chosen $h_j$, $a_j$ and $b_j$ into \RefThm{thm:discretization}, the discretization error can be estimated as
\begin{align*}
|I(g) - I_{\bsh}(g)| 
\leq
\, C(s,\omega) \,\|\hat g \, \omega\|_{L^\infty(\R^s)} \, \exp(-a/\hbar)
\approx
\exp(-a/\hbar)
.
\end{align*}
The value of $a$ will be determined numerically in order to achieve the best accuracy. The trapezoidal summation is truncated when its terms are small enough, namely when the following condition is satisfied
\begin{align*}
 \hbar^s \left| \prod_{j=1}^s \frac{\sin(M \, \phi_j(n_j \hbar))}{ \phi_j(n_j \hbar)} \phi_j'(n_j \hbar) \right| 
<  
\, \exp(-a/\hbar)
.
\end{align*} 

In the implementation, we repeat the test for $\hbar=\pi/M$ with an increasing sequence of $M$. We tested this problem numerically up to the 4 dimensional case. We choose $a=5$ for $s=1,2,3$ and $a=6$ for $s=4$. \RefFig{fig:3} (log-log) shows the relative error with respect to the number of function evaluations, which shows an expected rate of convergence close to $\mathcal{O}\left(\exp(-c\,N^{1/s} / \log N)\right)$, where $c\approx 2.17,\, 4.32,\, 5.11,\, 6.86$ for the case $s=1,2,3,4$ respectively.

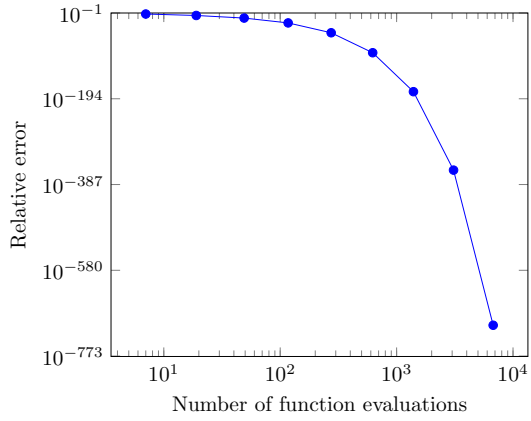
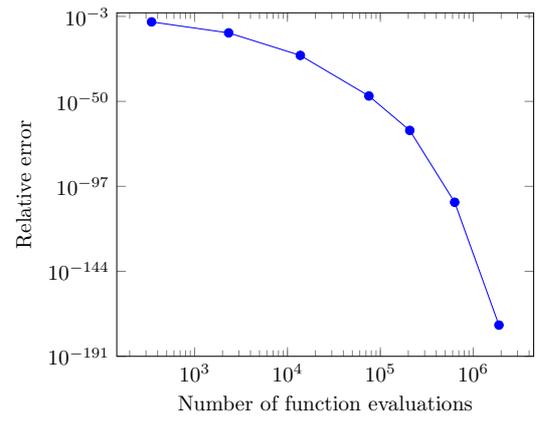
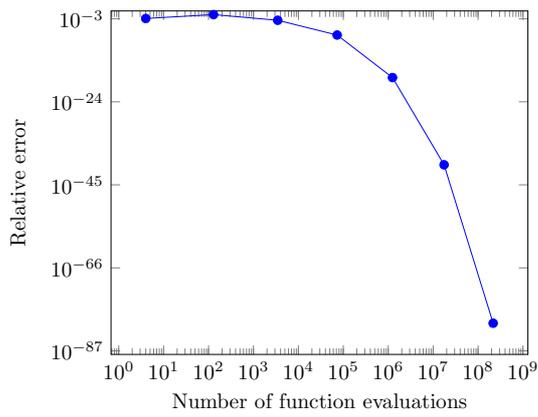
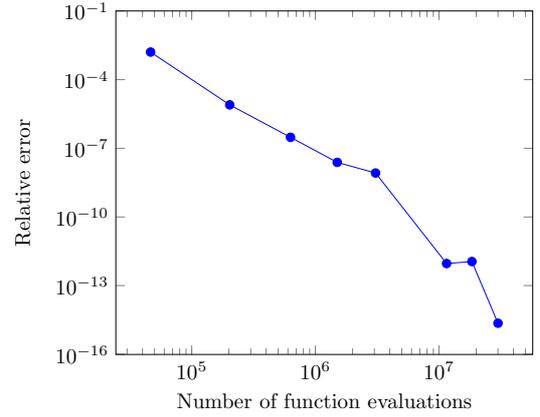
\begin{figure}
	\centering
	\begin{subfigure}[b]{0.4\textwidth}
		\centering
		\begin{tikzpicture}[scale=0.8]
		\begin{loglogaxis}
		[/pgf/number format/.cd,
        		1000 sep={},
        		xlabel={Number of function evaluations},
        		ylabel={Relative error},
        		ylabel style={yshift=0.3cm},
        		ymax = 0.1,  
        		legend pos = south east]
		\addplot[color=blue, mark= *] table {sinc1D.txt};
		\end{loglogaxis}
		\end{tikzpicture}
	\caption{$s=1$}
	\end{subfigure}
	\hfill
	\begin{subfigure}[b]{0.4\textwidth}
		\centering
		\begin{tikzpicture}[scale=0.8]
		\begin{loglogaxis}
			[/pgf/number format/.cd,
        			1000 sep={},
        			xlabel={Number of function evaluations},
        			ylabel={Relative error},
        			ylabel style={yshift=0.3cm},
        			ymax = 0.1,  
        			legend pos = south east]
			\addplot[color=blue, mark= *] table {sinc2D.txt};
		\end{loglogaxis}
		\end{tikzpicture}
	\caption{$s=2$}
	\end{subfigure}
	\hfill
	\begin{subfigure}[b]{0.4\textwidth}
		\centering
		\begin{tikzpicture}[scale=0.8]
		\begin{loglogaxis}
			[/pgf/number format/.cd,
        			1000 sep={},
        			xlabel={Number of function evaluations},
        			ylabel={Relative error},
        			ylabel style={yshift=0.3cm},
        			ymax = 0.1,  
        			legend pos = south east]
			\addplot[color=blue, mark= *] table {sinc3D.txt};
		\end{loglogaxis}
		\end{tikzpicture}
	\caption{$s=3$}
	\end{subfigure}
	\hfill
	\begin{subfigure}[b]{0.4\textwidth}
		\centering
		\begin{tikzpicture}[scale=0.8]
		\begin{loglogaxis}
			[/pgf/number format/.cd,
        			1000 sep={},
        			xlabel={Number of function evaluations},
        			ylabel={Relative error},
        			ylabel style={yshift=0.3cm},
        			ymax = 0.1,  
        			legend pos = south east]
			\addplot[color=blue, mark= *] table {sinc4D_C6.txt};
		\end{loglogaxis}
		\end{tikzpicture}
	\caption{$s=4$}
	\end{subfigure}
	\hfill
	\caption{The relative error of the integration $\int_{\R^s} \prod_{j=1}^s \sinc(x_j) \rd \bsx$ with respect to the number of function evaluations.}
	\label{fig:3}
\end{figure}

\end{example}
In view of the theoretical results and the results of numerical experiments, we may see the curse of dimensionality. This will be discussed more in \RefSec{Conclusion}.
\section{Conclusion}\label{Conclusion}
In this work we have analyzed the integration of analytic functions over Euclidean space, we show the truncated trapezoidal rule is efficient, i.e., the error decays exponentially fast, for a moderate number of dimensions. This setting allows us to estimate both the discretization error and truncation error; and then optimally balance them. 
We have also numerically verified our theory by some numerical tests. The obtained results match very well with the theoretical estimations. 

Finally, the convergence rates in \RefThm{TheoEC} and \RefThm{TheoDEC} depend strongly on the dimension $s$ via the parameters $B(s)$ and $D(s)$. One may hope to relax these dependences by making stronger assumptions on the smoothness and the decay of the functions, e.g., by assuming both $B(s)$ and $D(s)$ to be uniformly bounded when $s$ increases. This is equivalent to requiring  the function and its Fourier transform to be both concentrated at the origin which is unfortunately in conflict with a general phenomenon sometimes called ``the Heisenberg uncertainty problem'' \cite{Hir50}. Additionally, although our truncation strategy is the simplest one, easy to be analyzed and implemented, it has not inexhaustibly taken advantage of the trapezoidal rule. For example, for the integration of the Gaussian function as in~\eqref{example1} it is easy to see that it will be more efficient if we truncate the trapezoidal points within a sphere rather than a cube. Therefore, it is necessary to seek for other possible efficient truncation strategies. This will be left for future research.  
\section*{Acknowledgment}
We are grateful to two anonymous referees whose comments helped significantly improve and clarify this paper. The authors were supported by the KU Leuven research fund OT:3E130287.
\bibliography{IRs}
\end{document}